\newcommand\restr[2]{{
  \left.\kern-\nulldelimiterspace
  #1
  \vphantom{\big|}
  \right|_{#2}
  }}
\newcommand{\suchthat}{\;\ifnum\currentgrouptype=16 \middle\fi|\;}
\newcommand{\mapsfrom}{\mathrel{\reflectbox{\ensuremath{\mapsto}}}}
\newcommand\bint[2]{{
  \left.\kern-\nulldelimiterspace
  \vphantom{\int}
  \right.^b \hspace{-0.25cm}\int_{#1}^{#2}
  }}
\newcommand\cint[2]{{
  \left.\kern-\nulldelimiterspace
  \vphantom{\int}
  \right.^c \hspace{-0.25cm}\int_{#1}^{#2}
  }}
\newtheorem{theorem}{Theorem}
\newtheorem{proposition}[theorem]{Proposition}
\newtheorem{lemma}[theorem]{Lemma}
\newtheorem{corollary}[theorem]{Corollary}
\theoremstyle{definition}
\newtheorem{definition}[theorem]{Definition}
\newtheorem{remark}[theorem]{Remark}
\newtheorem{example}[theorem]{Example}
\begin{document}

\title{ The geometry of $E$-manifolds}

\author{Eva Miranda}
\address{Laboratory of Geometry and Dynamical Systems, Department of Mathematics and BGSMath at Universitat Polit\`ecnica de Catalunya, Barcelona and CEREMADE (Université de Paris Dauphine) -IMCCE (Observatoire de Paris) and IMJ (Université de Paris-Diderot) at Paris, France}\email{eva.miranda@upc.edu, Eva.Miranda@obspm.fr}
\thanks{Eva Miranda is supported by the Catalan Institution for Research and Advanced Studies via an ICREA Academia 2016 Prize, by a \emph{Chaire d'Excellence} de la Fondation Sciences Math\'{e}matiques de Paris, and partially supported by the  Ministerio de Econom\'{\i}a y Competitividad project with reference MTM2015-69135-P/FEDER and by the Generalitat de Catalunya project with reference 2014SGR634. This work is supported by a public grant overseen by the French National Research Agency (ANR) as part of the \emph{\lq\lq Investissements d'Avenir"} program (reference: ANR-10-LABX-0098).}
\author{Geoffrey Scott}
\address{Department of Mathematics, University of Toronto, Toronto, Canada}
\email{gscott@math.utoronto.ca}

\date{\today}

\begin{abstract}  Motivated by the study of symplectic Lie algebroids, we study a describe a type of algebroid (called an $E$-tangent bundle) which is particularly well-suited to study of singular differential forms and their cohomology. This setting generalizes the study of $b$-symplectic manifolds, foliated manifolds, and a wide class of Poisson manifolds. We generalize Moser's theorem to this setting, and use it to construct symplectomorphisms between singular symplectic forms. We give applications of this machinery (including the study of Poisson cohomology), and study specific examples of a few of them in depth.
\end{abstract}

\maketitle
\section{Introduction}

Symplectic Lie algebroids were studied by Nest and Tsygan in \cite{nestandtsygan}. In this paper we review some of their properties and give a catalogue of examples. Our main motivation for writing this paper is on the one hand to export Moser's path method to this realm but also to give explicit computations in terms of their algebroid cohomology and Poisson cohomology which are not isomorphic in general.

\begin{definition}
Let $E$ be a locally free submodule of the $C^{\infty}$ module $Vect(M)$ of vector fields on $M$. By the Serre-Swan theorem, there is an \textbf{$E$-tangent bundle} ${^E}TM$ whose sections (locally) are sections of $E$, and an \textbf{$E$-cotangent bundle} ${^E}T^*M := ({^E}TM)^*$. We will call the global sections of $\wedge^p ({^E}T^*M)$ \textbf{$E$-forms of degree $p$}, and denote the space of all such sections by ${^E}\Omega^p(M)$. If $E$ satisfies the involutivity condition $[E, E] \subseteq E$, there is a differential $d: {^E}\Omega^p(M) \rightarrow {^E}\Omega^{p+1}(M)$ given by
\begin{align*}
d\omega(V_0, \dots, V_p) &= \sum_i (-1)^iV_i\left(\omega\left(V_0, \dots, \hat{V_i}, \dots, V_p \right) \right) \\ & \ \ \ + \sum_{i < j} (-1)^{i+j}\omega\left([V_i, V_j], V_0, \dots, \hat{V}_i, \dots, \hat{V}_j, \dots, V_p\right).
\end{align*}

The cohomology of this complex is the \textbf{$E$-cohomology} ${^E}H^*(M)$. We call a closed nondegenerate $E$-form of degree 2 an \textbf{$E$-symplectic form}, and the triple $(M, E, \omega)$ an \textbf{$E$-symplectic manifold}.
\end{definition}
The bundle ${^E}TM \rightarrow M$ is naturally a Lie algebroid, whose bracket is given by the standard bracket of vector fields, and whose anchor map ${^E}TM \rightarrow TM$ is induced by the inclusion $E \subseteq Vect(M)$. The $E$-cohomology is the same as the Lie algebroid cohomology, and an \textbf{$E$-symplectic manifold} is an example of a symplectic Lie algebroid. When $E = \textrm{Vect}(M)$, then the $E$-(co)tangent bundle is the same as the standard (co)tangent bundle, and an $E$-symplectic form is a standard symplectic form. Throughout this paper, we will denote by $EVect(M)$ the set of all locally free submodules $E$ of $Vect(M)$ satisfying the involutivity condition $[E, E] \subseteq [E]$, and the \emph{rank} of an $E \in EVect(M)$ means the rank of the vector bundle ${^E}TM$. In this paper, we study examples of $E$-symplectic manifolds.
\begin{description}
\item[$b$-manifolds, $b^k$-manifolds] For any closed embedded hypersurface $Z \subseteq M$, we can let $E$ be the submodule of vector fields tangent to $Z$. In this case, the geometry of $E$-symplectic forms has been well-studied under the name of $b$-symplectic geometry (see for instance \cite{guimipi11}, \cite{guimipi12}, \cite{scott}, \cite{km}, \cite{dkm}, \cite{kms}). In the presence of the additional structure of a $k$-jet of a defining function along $Z$, the submodule $E$ of vector fields with ``order $k$ tangency'' to $Z$ defines an element of $EVect(M)$, and the resulting geometry has been studied under the name of $b^k$-geometry.
\item[$c$-manifolds] A $c$-symplectic manifold is a generalization of a $b$-symplectic manifold where the hypersurface $Z$ is allowed to have transverse self-intersection. We prove a Mazzeo-Melrose type theorem for the $E$-cohomology, and also give an explicit example of an $E$-symplectic structure of this type on $S^4$, which demonstrates that certain obstruction theorems in $b$-symplectic geometry proven in \cite{marcutosorno1}  do not generalize to the $c$-symplectic setting.
\item[Elliptic type singularities] The submodule $E \subseteq Vect(\mathbb{R}^2)$ generated (as a $C^{\infty}$-module) by the vector fields
\[
v = x \frac{\partial}{\partial x} + y \frac{\partial}{\partial y} \hspace{1cm} \textrm{and} \hspace{1cm} w = -y \frac{\partial}{\partial x} + x\frac{\partial}{\partial y}.
\]
is involutive, hence $E \in EVect(\mathbb{R}^2)$. This is the prototypical example of an \emph{elliptic type} $E$-structure, which in general consists of a codimension-2 submanifold together with a $E$-structure on its normal bundle. These have been used in \cite{cavgual1} to study stable generalized complex structures. We calculate the $E$-cohomology of such a manifold.
\item[Regular foliations] To any regular foliation of rank $r$, the submodule $E \subseteq Vect(M)$ of vector fields tangent to the leaves of the foliation is involutive by the Frobenius theorem, hence $E \in EVect(M)$. The complex $({^E}\Omega^*(M), d)$ is the familiar complex of foliated differential forms whose cohomology is foliated cohomology. In this context, Moser's lemma was proven in \cite{moserfeuillete}.
\end{description}

In addition, it is possible to construct examples by describing the generators of the submodule explictly. For example, the vector fields on $\mathbb{R}^2$
\[
v = x\frac{\partial}{\partial x} + y \frac{\partial}{\partial y} \hspace{1cm} w = y\frac{\partial}{\partial x}.
\]
generate an $E \in EVect(\mathbb{R}^2)$.

\textbf{Organization of this paper:}
The paper consists on 4 different sections. In section 2, we prove a Moser theorem for $E$-symplectic forms and discuss applications to the specific examples of $E$-symplectic geometries listed in the introduction. In section 3, we compute the $E$-cohomology of a $c$-manifold and prove a Mazzeo-Melrose formula. We also give an example of two $c$-symplectic forms on $\mathbb{R}^2$ which are not symplectomorphic, showing that the Darboux theorem of symplectic geometry does not generalize to this context. In the last section of this paper we give an explicit example of $E$-manifold and compute its $E$-cohomology proving that it is not isomorphic to its Poisson cohomology.

\section{Moser Path Method for $E$-symplectic Manifold}
In this section, we generalize Moser's theorem to the setting of $E$-symplectic manifolds. Towards this goal, we first generalize some basic operations on differential forms to the context of $E$-forms. A diffeomorphism $\phi: M \rightarrow M$  acts on a submodule $E \in EVect(M)$ by the usual pushforward, which induces a dual map ${^{\phi_*(E)}}\Omega(M) \rightarrow {^E}\Omega(M)$. Notice that unless $\phi_*(E) = E$, an $E$-form $\omega$ and its pullback $\phi^*(\omega)$ are not sections of the same bundle. For diffeomorphisms given as the flows of sections of $E$, the condition $\phi_*(E) = E$ is satisfied.

\begin{proposition}(Proposition 1.6 of \cite{as}) Let $E \in EVect(M)$, and let $X \in \Gamma(M, E)$. The time-t flow $\rho_t$ of $X$ satisfies $(\rho_t)_*(E) = E$.
\end{proposition}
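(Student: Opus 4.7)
The key observation is that involutivity of $E$ combined with $X \in \Gamma(M, E)$ implies that the Lie derivative $\mathcal{L}_X = [X, \cdot]$ preserves $\Gamma(M, E)$. My plan is to work locally: near any point $p \in M$, pick a local frame $e_1, \ldots, e_k$ of $E$, and show that each pushforward $Y_j(t) := (\rho_{-t})_* e_j$ remains a section of $E$ for all $t$ in the relevant domain. Since the $e_j$ span $E$ over $C^\infty$ near $p$, this will give $(\rho_{-t})_*(E) \subseteq E$ as submodules, and interchanging $X$ with $-X$ (whose flow is $\rho_{-t}$) yields the reverse inclusion, hence equality.

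To show $Y_j(t) \in \Gamma(M, E)$, I would use the classical identity
\[
\frac{d}{dt} (\rho_{-t})_* Y = (\rho_{-t})_* [X, Y]
\]
for any vector field $Y$. Applied to $e_j$, and using involutivity to write $[X, e_j] = \sum_i f_{ij} e_i$ for smooth functions $f_{ij}$ defined near $p$, this produces a linear first-order ODE
\[
\frac{d}{dt} Y_j(t) = \sum_i \tilde f_{ij}(t) \, Y_i(t), \qquad Y_j(0) = e_j,
\]
where $\tilde f_{ij}(t, q) := f_{ij}(\rho_t(q))$ is smooth in $(t, q)$.

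For each fixed $q$, I would consider the matrix ODE $\dot A(t) = A(t) \, \tilde f(t, q)$ with $A(0) = I$, whose solution $A(t)(q)$ depends smoothly on $q$ by smooth dependence of ODEs on parameters. Define a candidate solution $\tilde Y_j(t) := \sum_\ell A_{\ell j}(t) \, e_\ell$, which is manifestly a section of $E$. A direct computation shows $\tilde Y_j(t)$ satisfies the same linear ODE above (viewed pointwise in $T_q M$) with the same initial data $e_j$. By uniqueness of solutions to linear ODEs in $T_q M$, we conclude $Y_j(t)(q) = \tilde Y_j(t)(q) \in E_q$ for every $q$, so $Y_j(t) \in \Gamma(M, E)$ locally. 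Covering $M$ by such trivializing neighborhoods completes the argument.

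The main technical point I expect to watch is the interplay between the pointwise ODE in $T_q M$ and the smooth dependence on $q$: one must argue that the $E$-valued candidate $\tilde Y_j(t)$ is not just a collection of solutions in each tangent space but a genuine smooth section, and invoke uniqueness of ODE solutions fiberwise to match it with $Y_j(t)$. Once this is in place, the only other subtlety is the domain of definition of $\rho_t$ when $X$ is incomplete, but the statement is local in $t$ and $p$, so this causes no real difficulty.
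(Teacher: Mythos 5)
The paper does not actually prove this proposition; it is quoted verbatim from Proposition 1.6 of the cited reference of Androulidakis--Skandalis, so there is no in-paper argument to compare against. Your proof is correct and is essentially the standard argument (and the one used in that reference): the identity $\frac{d}{dt}(\rho_{-t})_*Y=(\rho_{-t})_*[X,Y]$ together with involutivity turns the pushforward of a local frame into a solution of a linear ODE, and the explicitly $E$-valued candidate $\sum_\ell A_{\ell j}(t)e_\ell$ built from the matrix solution is matched to $(\rho_{-t})_*e_j$ by fiberwise uniqueness. You correctly avoid the trap of arguing via pointwise spans in $T_qM$ (which can drop rank, e.g.\ for the $b$-tangent module) by instead exhibiting smooth coefficient functions, which is what membership in the submodule actually requires. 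The one step you gesture at but do not execute is passing from small $t$ to all $t$ in the domain of the flow: the frame $e_1,\dots,e_k$ and the coefficients $f_{ij}$ live only near $p$, so for larger $t$ you must either cover the flow line by finitely many trivializing charts and compose short-time flows (using that two local frames of a locally free module differ by an invertible matrix over $C^\infty$ on overlaps), or note that the set of $t$ for which $(\rho_t)_*(E)=E$ holds near a given orbit segment is open, closed, and contains $0$. This is routine bookkeeping rather than a gap in the idea.
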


\noindent We denote by $Aut(M, E)$ the set of all diffeomorphisms $\phi: M \rightarrow M$ such that $\phi_*(E) = E$.

\begin{definition}\label{def:ops} Let $E \in EVect(M)$, let $\omega \in {^E}\Omega^p(M)$, and let $X \in \Gamma(M, E)$. Define $\iota_X: {^E}\Omega^p(M) \rightarrow {^E}\Omega^{p-1}(M)$ by
\[
\iota_X(\omega)(X_1, \dots, X_{p-1}) = \omega(X, X_1, \dots, X_{p-1})
\]
for $p \geq 1$ and by the zero map for $p = 0$. Define $\mathcal{L}_{X}(\omega) := \restr{\frac{d}{dt}}{t = 0} \rho_t^*(\omega)$, where $\rho_t$ is the time-t flow of the time-dependent vector field $X_t$.
\end{definition}
\noindent These operations satisfy
\[
 \mathcal{L}_{X}(\omega) = (d\iota_X + \iota_Xd)(\omega) \hspace{1cm} \textrm{and}  \hspace{1cm} \frac{d}{dt}\rho_t^* \omega_t = \rho_t^*(\mathcal{L}_{X_t}\omega_t + \frac{d}{dt} \omega_t).
\]
The proof of this is the same as for usual differential forms (for example, see Theorem 5.2.2 of \cite{marle} and Proposition 6.4 of \cite{acs})

\begin{lemma}
Let $E \in EVect(M)$, and let $v_t$ be a time dependent $E$-vector field with flow $\varphi_t$ defined for $0 \leq t \leq 1$. Then the map ${^E}H^p(M) \rightarrow {^E}H^p(M)$ induced by $\varphi_1^*$ is the identity.
\end{lemma}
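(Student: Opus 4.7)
The plan is to adapt the classical homotopy-operator proof that the flow of a vector field acts trivially on de Rham cohomology, using the $E$-versions of Cartan's magic formula and the time-derivative formula for pullbacks that are stated immediately before the lemma. The key observation is that, by the proposition of Aufranc--Sevestre quoted above, $(\varphi_t)_*(E) = E$ for all $t$, so $\varphi_t^* \colon {^E}\Omega^p(M) \to {^E}\Omega^p(M)$ is a genuine endomorphism of the $E$-de Rham complex rather than a map into a different space of forms.

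First I would fix a closed $E$-form $\omega \in {^E}\Omega^p(M)$ with $d\omega = 0$, and apply the formula
\[
\frac{d}{dt}\varphi_t^*\omega = \varphi_t^*\bigl(\mathcal{L}_{v_t}\omega\bigr) = \varphi_t^*\bigl(d\iota_{v_t}\omega + \iota_{v_t}d\omega\bigr) = \varphi_t^*\bigl(d\iota_{v_t}\omega\bigr),
\]
where I have used Cartan's magic formula in the $E$-setting (stated immediately before the lemma) together with $d\omega = 0$. Next, I would note that $\varphi_t^*$ commutes with $d$ on $E$-forms: this is a pointwise algebraic statement that follows from $\varphi_t \in \mathrm{Aut}(M, E)$ and the definition of $d$ via the Lie bracket on $E$, so it is formally identical to the usual argument for smooth forms.

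Then, integrating from $0$ to $1$ and pulling $d$ outside the integral gives the homotopy formula
\[
\varphi_1^*\omega - \omega = \int_0^1 \frac{d}{dt}\varphi_t^*\omega \, dt = d \left( \int_0^1 \varphi_t^*\bigl(\iota_{v_t}\omega\bigr) \, dt \right).
\]
Since $\iota_{v_t}\omega \in {^E}\Omega^{p-1}(M)$ and $\varphi_t^*$ preserves $E$-forms, the integrand is a smooth one-parameter family in ${^E}\Omega^{p-1}(M)$, and the integral makes sense as an $E$-form of degree $p-1$. Consequently $\varphi_1^*\omega - \omega$ is $E$-exact, so it represents the zero class in ${^E}H^p(M)$.

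The main obstacle, to the extent there is one, is simply verifying that the standard operations really behave as expected on $E$-forms: that the integral of a smooth family of $E$-forms is again an $E$-form, and that the commutation $d \circ \varphi_t^* = \varphi_t^* \circ d$ holds in this setting. Both hinge on $(\varphi_t)_*(E) = E$, which has already been provided. Beyond this, the argument is formally identical to the classical one and does not require any analytic input specific to the singular geometry.
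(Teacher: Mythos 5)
Your proof is correct and follows essentially the same route as the paper: both define the homotopy operator $Q\omega=\int_0^1\varphi_t^*(\iota_{v_t}\omega)\,dt$ and deduce $\varphi_1^*\omega-\omega=dQ\omega$ for closed $\omega$. You in fact supply more detail than the paper does, deriving the homotopy identity from Cartan's formula and the commutation $d\circ\varphi_t^*=\varphi_t^*\circ d$, which the paper simply asserts.
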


\begin{proof}
Let $\omega \in {^E}\Omega^p(M)$, and $\omega_1=\varphi_1^*(\omega)$, and consider the homotopy operator $Q: {^E}\Omega^p(M) \rightarrow {^E}\Omega^{p-1}(M)$ given by
\[
Q\omega=\int_0^1\varphi_t^*(\iota_{v_t}\omega) dt
\]
For any closed $\omega \in {^E}\Omega^p(M)$, This homotopy operator satisfies $\varphi_1^*(\omega) - \omega = dQ(\omega)$, so $[\varphi_1^*(\omega)] = [\omega]$.
\end{proof}

Beware that not every diffeomorphism in $Aut(M, E)$ isotopic to the identity is isotopic to the identity \emph{through a path in} $Aut(M, E)$.
\begin{example} Consider the submodule $E$ of $TS^2$ consisting of vector fields tangent to the equator. Let $\rho_t$ denote a rotation of $t\pi$ along the axis shown below.

\begin{figure}[ht]\label{fig:equator_on_sphere}
\centering
\begin{tikzpicture}
\begin{scope}[xshift=-6cm]
	\draw (0, 0) circle(1cm);
	\draw[thick] (-1, 0) arc(180:360: 1cm and 0.25cm);
	\draw[thick, dashed] (-1, 0) arc(180:0: 1cm and 0.25cm);
	\draw[dotted] (-1.4, 0) -- (1.4, 0);
	\draw[->] (1.4, 0.3) arc(90:360:.15cm and .3cm);
\end{scope}
\end{tikzpicture}
\end{figure}

If $\omega = dh/h \wedge d\theta$, then $\rho_1^*(\omega) = -dh/h \wedge d\theta$, so $[\rho_1^*\omega] \neq [\omega]$. Although the map $\rho_1$ is isotopic to $\rho_0 = \textrm{id}$, it is not isotopic through a path in $Aut(M, E)$.
\end{example}

\begin{theorem}\label{thm:moser}
Let $M$ be a compact manifold, $E \in EVect(M)$, and let $\omega_0, \omega_1 \in {^E}\Omega^2(M)$. If $\omega_t := (1-t)\omega_0 + t\omega_1$ is an $E$-symplectic form for all $t \in [0, 1]$, and $[\omega_0] = [\omega_1] \in {^E}H^2(M)$, then there is a time-dependent $E$-vector field $X_t$ whose flow $\rho_t$ satisfies $\rho_t^*\omega_t = \omega_0$ for all $t \in [0, 1]$.
\end{theorem}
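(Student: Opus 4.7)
The plan is to run Moser's path method, essentially verbatim, using the $E$-form machinery developed in the preceding paragraphs.

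First, since $[\omega_0]=[\omega_1]$ in ${^E}H^2(M)$, the difference $\omega_1-\omega_0$ is exact as an $E$-form, so I can fix a primitive $\mu \in {^E}\Omega^1(M)$ with $d\mu = \omega_1 - \omega_0$. Next, the hypothesis that $\omega_t$ is $E$-symplectic means that at each point of $M$ it gives a nondegenerate alternating form on the fibre of ${^E}TM$, and hence a fibrewise isomorphism ${^E}TM \to {^E}T^*M$ sending $X \mapsto \iota_X \omega_t$. Inverting this isomorphism (smoothly in $t$) produces a time-dependent section $X_t$ of $E$ characterized by
\[
\iota_{X_t} \omega_t = -\mu.
\]

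Because $M$ is compact and $X_t$ descends via the anchor to a smooth time-dependent vector field on $M$, its flow $\rho_t$ is defined for all $t \in [0,1]$. By the proposition of \cite{as} quoted just before Definition \ref{def:ops}, $\rho_t \in Aut(M,E)$, so pulling back $E$-forms by $\rho_t$ makes sense. Using the two identities recorded after Definition \ref{def:ops} (the Cartan formula $\mathcal L_{X_t} = d\iota_{X_t} + \iota_{X_t} d$ for $E$-forms, and the standard differentiation-under-pullback formula), together with $d\omega_t = 0$ and $\tfrac{d}{dt}\omega_t = \omega_1 - \omega_0$, I get
\[
\frac{d}{dt}\rho_t^*\omega_t \;=\; \rho_t^*\!\left(\mathcal L_{X_t}\omega_t + \tfrac{d}{dt}\omega_t\right) \;=\; \rho_t^*\!\bigl(d\iota_{X_t}\omega_t + (\omega_1-\omega_0)\bigr) \;=\; \rho_t^*\bigl(-d\mu + d\mu\bigr) \;=\; 0.
\]
Integrating in $t$ and using $\rho_0=\mathrm{id}$ yields $\rho_t^*\omega_t = \omega_0$ for all $t \in [0,1]$, as desired.

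The only points that are not purely formal are (i) that the primitive $\mu$ of $\omega_1-\omega_0$ can be chosen as a genuine $E$-form, which is immediate from the definition of ${^E}H^*(M)$ as the cohomology of $({^E}\Omega^*(M),d)$, and (ii) that the flow of $X_t$ preserves $E$ (so that all pullbacks live in the same complex), which is exactly the content of the cited Proposition of \cite{as}. The main potential obstacle is therefore verifying that the pointwise nondegeneracy of an $E$-symplectic form really produces a smooth inverse ${^E}T^*M \to {^E}TM$ yielding an $E$-vector field $X_t$; but this is just linear algebra applied fibrewise to the vector bundle ${^E}TM$, so no singular analysis is needed.
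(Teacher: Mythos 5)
Your proof is correct and follows essentially the same route as the paper: choose a primitive $\mu$ of $\omega_1-\omega_0$, define $X_t$ by $\iota_{X_t}\omega_t=-\mu$ using nondegeneracy, and verify $\frac{d}{dt}\rho_t^*\omega_t=0$ via the Cartan formula. The extra care you take with the invertibility of $\omega_t^\flat$ on ${^E}TM$ and the invariance of $E$ under the flow is implicit in the paper's argument but not a different method.
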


\begin{proof}
Let $X_t$ be a time-dependent $E$-vector field, and let $\rho_t$ be its flow. Then $\frac{d}{dt}\rho_t^* \omega_t = \rho_t^*(\mathcal{L}_{X_t}\omega_t + \frac{d}{dt} \omega_t)$, so the condition that $\rho_t^*\omega_t$ is constant is equivalent to the condition that $\mathcal{L}_{X_t}\omega_t = - \frac{d}{dt} \omega_t$. Because $\omega_t$ is closed, the Cartan formula simplifies this condition to $d(\iota_{X_t}\omega_t) = \textcolor{red}{-} \frac{d}{dt} \omega_t$. Therefore, the construction of the desired isotopy $\rho_t$ entails finding a $X_t$ satisfying this equation. Using that $\frac{d\omega_t}{dt} = \omega_1 - \omega_0$ is cohomologically trivial, pick $\mu \in\,{^E}\Omega^1(M)$ such that $d \mu = \omega_1 - \omega_0$ and define $X_t$ to be the $E$-vector field satisfying $\iota_{X_t}\omega_t = \textcolor{red}{-}\mu$. Then, $d\iota_{X_t}\omega_t = \textcolor{red}{-} \frac{d}{dt} \omega_t$ as required.
\end{proof}

We can recover the $b$-Moser theorem from \cite{guimipi12} and the $b^{m}$-Moser theorem from \cite{gmw} as a special case of Theorem \ref{thm:moser}, when the $E$-submodule in question consists of the vector fields tangent to a hypersurface of $M$ (or, in the $b^m$ case, those with a higher-order tangency to a hypersurface).
\begin{corollary}[\textbf{$b$-Moser theorem}] \label{bmoser}
Suppose that $M$ is compact and let $\omega_0$ and $\omega_1$ be two cohomologous $b$-symplectic forms on $(M,Z)$. If $\omega_t = (1-t)\omega_0 + t\omega_1$ is $b$-symplectic for $0\leq t\leq 1$, there is a family of diffeomorphisms $\gamma_t:M\to M$, for $0\leq t\leq 1$ such that $\gamma_t$ leaves $Z$ invariant and $\gamma_t^*\omega_t=\omega_0$.
\end{corollary}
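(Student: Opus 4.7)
The plan is to realize the $b$-setting as a special instance of the $E$-framework and invoke Theorem \ref{thm:moser} directly. I would take $E \subseteq Vect(M)$ to be the $C^{\infty}$-module of vector fields tangent to the closed embedded hypersurface $Z$, which is locally free and involutive (tangency to $Z$ is preserved under the Lie bracket), so $E \in EVect(M)$. With this choice of $E$, the $E$-tangent bundle is exactly the standard $b$-tangent bundle of $(M,Z)$, an $E$-symplectic form is precisely a $b$-symplectic form, and the $E$-cohomology agrees with the usual $b$-cohomology. The hypotheses on $\omega_0, \omega_1$ in the corollary then translate verbatim into the hypotheses of Theorem \ref{thm:moser}.

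First, I would apply Theorem \ref{thm:moser} to obtain a time-dependent $E$-vector field $X_t$ whose flow $\rho_t$ satisfies $\rho_t^* \omega_t = \omega_0$ for every $t \in [0,1]$; global existence of the flow on $[0,1]$ is guaranteed by compactness of $M$. Set $\gamma_t := \rho_t$. This already delivers the required relation $\gamma_t^*\omega_t = \omega_0$.

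Second, I would verify that each $\gamma_t$ leaves $Z$ invariant. Since $X_t$ is an $E$-vector field, at every point of $Z$ it is tangent to $Z$; because $Z$ is a closed embedded submanifold, integral curves of $X_t$ starting in $Z$ stay in $Z$ and integral curves starting in $M \setminus Z$ stay outside $Z$ for as long as they exist. Hence $\gamma_t(Z) = Z$ for all $t \in [0,1]$, completing the corollary.

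There is no real obstacle here beyond identifying the $b$-structure with the appropriate $E$-structure and recording that flows of sections of $E$ preserve $Z$; all the analytic content of the isotopy construction is already packaged inside Theorem \ref{thm:moser}. The same argument, applied with $E$ taken to be the vector fields with order-$k$ tangency to $Z$, recovers the $b^k$-Moser theorem as advertised.
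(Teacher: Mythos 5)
Your proposal is correct and is exactly the paper's route: the paper derives Corollary \ref{bmoser} by taking $E$ to be the module of vector fields tangent to $Z$ and invoking Theorem \ref{thm:moser}. Your added verifications (involutivity of $E$, and invariance of $Z$ under the flow of a time-dependent section of $E$) are the details the paper leaves implicit, and they are right.
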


\begin{theorem}\label{thm:love} Let $M$ be a compact manifold, and let $E \in EVect(M)$. If $^{E}TM$ has rank 2, and $\omega_0, \omega_1$ are two $E$-symplectic forms such that $[\omega_0] = [\omega_1]$, and $\omega_0, \omega_1$ induce the same orientation on $^{E}TM$, then there is a $\phi \in Aut(M, E)$ such that $\phi^*(\omega_1) = \omega_0$.
\end{theorem}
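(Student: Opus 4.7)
The plan is to apply the $E$-Moser theorem (Theorem \ref{thm:moser}) to the affine path $\omega_t := (1-t)\omega_0 + t\omega_1$. The cohomological hypothesis $[\omega_0] = [\omega_1]$ is exactly what Theorem \ref{thm:moser} requires, so the only thing to check before invoking it is that every $\omega_t$ is $E$-symplectic, i.e.\ closed and nondegenerate. Closedness is immediate since a convex combination of closed forms is closed, so the real content is nondegeneracy, and it is precisely here that the rank-2 hypothesis plays its role.

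Since $\mathrm{rank}({}^E TM) = 2$, the bundle $\wedge^2({}^E T^*M)$ is a line bundle, and an $E$-form of degree $2$ is nondegenerate at a point if and only if it is nonzero there. The assumption that $\omega_0$ and $\omega_1$ induce the same orientation on ${}^E TM$ means that after fixing a trivialization of $\wedge^2({}^E T^*M)$ over any local oriented frame $(V_1, V_2)$ of $E$, the functions $\omega_0(V_1, V_2)$ and $\omega_1(V_1, V_2)$ have the same (nonzero) sign at every point. Hence
\[
\omega_t(V_1, V_2) = (1-t)\,\omega_0(V_1, V_2) + t\,\omega_1(V_1, V_2)
\]
is a strictly positive (resp.\ strictly negative) convex combination and is nowhere vanishing on $M$, for every $t \in [0,1]$. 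Thus $\omega_t$ is $E$-symplectic throughout the path.

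Applying Theorem \ref{thm:moser}, we obtain a time-dependent $E$-vector field $X_t$ whose flow $\rho_t$ satisfies $\rho_t^*\omega_t = \omega_0$ for all $t \in [0,1]$. Setting $\phi := \rho_1$ gives $\phi^*\omega_1 = \omega_0$. To see that $\phi \in Aut(M, E)$, note that each $X_t$ is a section of $E$, so by Proposition 1.6 of \cite{as} (applied to the suspended time-independent vector field $X_t + \partial_t$ on $M \times [0,1]$, whose sections lie in $E \oplus \mathbb{R}\partial_t$ and whose flow restricts to $\rho_t$ on the slices) we have $(\rho_t)_*(E) = E$ for all $t$. In particular $\phi = \rho_1 \in Aut(M, E)$, as required.

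The only real obstacle is the nondegeneracy of $\omega_t$, and that obstacle is dissolved by the line-bundle argument above; in higher rank the analogous statement fails because nondegeneracy of $\omega$ is controlled by $\omega^n$ rather than $\omega$ itself, and the convex combination $\omega_t^n$ need not be nowhere vanishing even when the two endpoints induce the same orientation on ${}^E TM$.
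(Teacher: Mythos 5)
Your proposal is correct and follows essentially the same route as the paper: run the linear path $\omega_t = (1-t)\omega_0 + t\omega_1$ through Theorem \ref{thm:moser} and take $\phi = \rho_1$. The paper simply asserts that $\omega_t$ is a path of $E$-symplectic forms, whereas you supply the justification (nondegeneracy of a $2$-form on a rank-$2$ bundle is nonvanishing of a section of a line bundle, preserved under convex combinations of sections of the same sign), which is exactly the intended role of the rank-$2$ and orientation hypotheses.
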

\begin{proof} Let $\omega_t = t\omega_1 + (1-t)\omega_0$. Then $\omega_t$ is a path of $E$-symplectic forms such that $[\omega_t]$ is constant. By Theorem \ref{thm:moser}, there is a time-dependent $E$-vector field $X_t$ whose flow $\rho_t$ satisfies $\rho_t^*\omega_t = \omega_0$. Then $\rho_1$ is the desired diffeomorphism.

\end{proof}

We can recover the classification of oriented symplectic surfaces, Radko's classification of $b$-symplectic surfaces, and the classification of $b^k$-symplectic surfaces in \cite{scott} as special cases of Theorem \ref{thm:love}.

\begin{corollary}[\textbf{Classification of symplectic surfaces, \cite{moser}}] Let $S$ be a  compact oriented surface, and and let  $\omega_0$ and $\omega_1$ be two symplectic forms on $(M,Z)$  with $[\omega_0]= [\omega_1]$. Then there exists a diffeomorphism $\phi:M\rightarrow M$ such that $\phi^*\omega_1 = \omega_0$.
\end{corollary}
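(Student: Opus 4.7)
The plan is to derive the corollary as an immediate specialization of Theorem \ref{thm:love}, taking $E = \textrm{Vect}(S)$. Under this choice, ${^E}TM$ reduces to the ordinary tangent bundle $TS$, which has rank $2$ because $S$ is a surface; the $E$-de Rham complex becomes the ordinary de Rham complex, so $[\omega_0] = [\omega_1]$ in ${^E}H^2(S)$ is exactly the standard cohomology hypothesis; an $E$-symplectic form is precisely a symplectic form in the usual sense; and $Aut(M,E)$ is just $\textrm{Diff}(S)$. Thus Theorem \ref{thm:love} will deliver a diffeomorphism $\phi \colon S \to S$ with $\phi^*\omega_1 = \omega_0$ as soon as the remaining hypothesis, namely that $\omega_0$ and $\omega_1$ induce the same orientation on $TS$, has been verified.

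To check the orientation condition I would argue component by component, using integration against the fixed orientation on $S$. On each connected component $C$, top-degree cohomology is detected by integration, so $[\omega_0|_C] = [\omega_1|_C]$ forces $\int_C \omega_0 = \int_C \omega_1$. Each $\omega_i$ is a nowhere-vanishing $2$-form on $C$, so its integral against the chosen orientation is nonzero and has a constant sign that records whether $\omega_i$ induces the given orientation or the reverse one. Equality of the two nonzero integrals then forces $\omega_0|_C$ and $\omega_1|_C$ to induce the same orientation on $TC$, and as this holds on every component the two forms induce the same orientation on $TS$ as a whole.

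Once this compatibility is in hand, Theorem \ref{thm:love} applies directly and produces the desired $\phi$. The only genuine step with any content is the orientation check above, and it is essentially a one-line consequence of Stokes's theorem together with the nonvanishing of a symplectic form on a surface; everything else is a straightforward translation of Theorem \ref{thm:love} into classical language via the choice $E = \textrm{Vect}(S)$.
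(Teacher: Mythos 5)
Your proposal is correct and matches the paper's (unwritten) argument: the paper simply declares this corollary a special case of Theorem \ref{thm:love} with $E = \textrm{Vect}(S)$, which is exactly your route. Your verification that cohomologous nowhere-vanishing $2$-forms on a compact oriented surface automatically induce the same orientation (via Stokes on each component) is a worthwhile addition, since that hypothesis of Theorem \ref{thm:love} is silently omitted from the corollary's statement and is needed for the convex combination $\omega_t$ to stay nondegenerate.
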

\begin{corollary}[\textbf{Classification of $b$-symplectic surface, \cite{guimipi12}}] Let $S$ be a  compact orientable surface and  and let  $\omega_0$ and $\omega_1$ be two $b$-symplectic forms on $(M,Z)$  defining the same $b$-cohomology class (i.e.,$[\omega_0]= [\omega_1]$).  Then there exists a diffeomorphism $\phi:M\rightarrow M$ such that $\phi^*\omega_1 = \omega_0$.
\end{corollary}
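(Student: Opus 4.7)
The plan is to apply Theorem \ref{thm:love} with $E \in EVect(M)$ taken to be the submodule of vector fields tangent to $Z$, as discussed in the first bullet of the introduction. Since $\dim M = 2$, the $E$-tangent bundle ${^E}TM$ has rank $2$, and by definition a $b$-symplectic form on $(M,Z)$ is exactly an $E$-symplectic form for this $E$. The hypothesis $[\omega_0] = [\omega_1]$ in $b$-cohomology translates directly into equality in ${^E}H^2(M)$.

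The remaining hypothesis of Theorem \ref{thm:love} that must be checked is that $\omega_0$ and $\omega_1$ induce the same orientation on ${^E}TM$. This is forced by the equality of their $b$-cohomology classes via the Mazzeo--Melrose isomorphism ${^E}H^2(M) \cong H^2(M) \oplus H^1(Z)$ (see \cite{guimipi11}, \cite{guimipi12}): the first summand records the de Rham class of the smooth part of $\omega_i$ and therefore fixes its sign as a volume form on $M \setminus Z$, while the second summand records the modular residues of $\omega_i$ along the components of $Z$ and therefore fixes its sign as a section of $\wedge^2({^E}T^*M)$ at points of $Z$. Together these two pieces of data pin down the fiberwise sign of $\omega_i$ as an element of $\wedge^2({^E}T^*M)$, hence the orientation induced on ${^E}TM$.

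Once these hypotheses are verified, Theorem \ref{thm:love} produces $\phi \in Aut(M, E)$ with $\phi^*\omega_1 = \omega_0$. Because $Z$ is intrinsically characterized as the locus where the anchor map ${^E}TM \to TM$ drops rank, every element of $Aut(M, E)$ automatically preserves $Z$, so $\phi$ is the desired diffeomorphism of $M$. The only delicate step is the orientation verification; once it is in hand the corollary is immediate from Theorem \ref{thm:love}, and no new Moser-type computation is needed beyond what has already been carried out.
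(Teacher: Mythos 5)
Your overall strategy is the same as the paper's: the paper derives this corollary with no further argument as an immediate special case of Theorem \ref{thm:love}, taking $E$ to be the vector fields tangent to $Z$, so that ${^E}TM$ has rank $2$ and $b$-cohomology is ${^E}H^*(M)$. You are right to notice that Theorem \ref{thm:love} has an orientation hypothesis that the corollary does not state, and that it must therefore be deduced from $[\omega_0]=[\omega_1]$; the paper is silent on this point. The conclusion of your orientation check is correct, but the justification you give for it contains a step that fails: the smooth part $\textrm{sm}(\omega)\in H^2(M)$ does \emph{not} ``fix the sign of $\omega$ as a volume form on $M\setminus Z$.'' Its pairing with the fundamental class is the regularized Liouville volume, which is a single global number and can even vanish for a genuine $b$-symplectic form --- for $\frac{dh}{h}\wedge d\theta$ on $S^2$ with $Z$ the equator, the contributions of the two hemispheres cancel --- so it cannot control the pointwise sign of $\omega$ near $Z$, nor distinguish the signs on the separate components of $M\setminus Z$.

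The repair is short. On a connected surface the ratio $\omega_1/\omega_0$ is a nowhere-vanishing smooth function (both forms are nowhere-vanishing sections of the line bundle $\wedge^2({^E}T^*M)$), hence has constant sign, and ``same orientation'' is precisely the statement that this sign is positive. If it were negative, then $\omega_1-\omega_0=(\omega_1/\omega_0-1)\,\omega_0$ would be a nowhere-vanishing section, i.e.\ itself a $b$-symplectic form; but a $b$-symplectic form on a compact surface cannot be $b$-exact, since either its residue on a component of $Z$ is a nowhere-vanishing closed $1$-form on a circle (nonzero class in $H^1$ of that component), or, when $Z=\emptyset$, its Liouville volume is nonzero. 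Either way this contradicts $[\omega_1-\omega_0]=0$ via the Mazzeo--Melrose decomposition, so the orientations agree. (Note your residue argument alone already handles every component of $M$ meeting $Z$; the volume argument is only needed for components with $Z=\emptyset$.) With this in place, Theorem \ref{thm:love} applies exactly as you say, and $\phi\in Aut(M,E)$ automatically preserves $Z$ since $Z$ is where the anchor drops rank.
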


Let $M$ be a manifold with a regular foliation, and $E$ the submodule of $Vect(M)$ consisting of vector fields tangent to the foliation. In this case, the complex ${^{E}}\Omega^{*}(M)$ is the usual complex for foliated cohomology, $H^*(\mathcal{F})$.

So in particular by rephrasing theorem \ref{thm:love} we obtain the following Moser's theorem for symplectic foliations.

\begin{theorem}\label{thm:lovefoliations} Let $M$ be a compact manifold, and let $\mathcal F$ be a regular foliation by $2$-dimensional leaves. If $\omega_0, \omega_1$ are two leafwise-symplectic forms representing the same foliated cohomology class and inducing the same orientation on $\mathcal{F}$, then there is a $\phi$ preserving the foliation  such that $\phi^*(\omega_1) = \omega_0$.
\end{theorem}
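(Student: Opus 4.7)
The plan is to recognize Theorem \ref{thm:lovefoliations} as a direct specialization of Theorem \ref{thm:love} to the submodule $E$ arising from a regular foliation. First I would set $E$ to be the submodule of $\operatorname{Vect}(M)$ consisting of vector fields tangent to the leaves of $\mathcal{F}$. By the Frobenius theorem $E$ is involutive, so $E \in EVect(M)$, and because $\mathcal{F}$ is a foliation by $2$-dimensional leaves, ${^E}TM$ has rank $2$, which matches the rank hypothesis of Theorem \ref{thm:love}.

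Next I would check that the foliated data matches the $E$-data. A leafwise-symplectic form is precisely an $E$-symplectic form, since the fibers of ${^E}TM$ are exactly the tangent spaces to the leaves; the foliated cohomology $H^*(\mathcal{F})$ coincides with ${^E}H^*(M)$, as already noted in the paragraph immediately preceding the theorem; and an orientation on $\mathcal{F}$ is the same thing as an orientation on ${^E}TM$. With these identifications the hypotheses of Theorem \ref{thm:love} are satisfied, and it produces a diffeomorphism $\phi \in Aut(M, E)$ with $\phi^*(\omega_1) = \omega_0$.

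The final step is to verify that membership in $Aut(M, E)$ is exactly the same as preserving the foliation $\mathcal{F}$. The condition $\phi_*(E) = E$ says that $\phi$ maps vector fields tangent to the leaves to vector fields tangent to the leaves, i.e. $\phi$ preserves the integrable distribution whose leaves are those of $\mathcal{F}$; since $\phi$ is a diffeomorphism and the leaves are the maximal connected integral submanifolds, $\phi$ must send leaves to leaves. There is no real obstacle here: the substantive analysis went into Theorem \ref{thm:moser} and Theorem \ref{thm:love}, and the present result is essentially a dictionary translation into foliated language.
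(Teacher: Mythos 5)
Your proposal is correct and follows exactly the route the paper takes: the paper presents this theorem as a direct rephrasing of Theorem~\ref{thm:love} for the submodule $E$ of vector fields tangent to the leaves, using the identification of the foliated complex with the $E$-complex noted just before the statement. Your additional verification that $Aut(M,E)$ consists precisely of foliation-preserving diffeomorphisms is a detail the paper leaves implicit, and it is handled correctly.
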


When the symplectic foliation  is the symplectic foliation of a regular Poisson manifolds, it would be desirable to obtain a path method that encodes the classification in terms of Poisson cohomology. The following result enables to do so for unimodular Poisson manifolds.
We recall from \cite{davidandeva},

\begin{theorem}[Martinez-Torres and Miranda, \cite{davidandeva}]\label{cor:1} Let $(M,\pi)$ be a compact orientable unimodular regular Poisson manifold of dimension $m$ and rank $2n$. Then
 then there is an isomorphism of cohomology groups:
 \[H^m_{\pi}(M)\rightarrow H^{2n}(\mathcal F)\]
\end{theorem}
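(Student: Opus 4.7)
The plan is to identify both $H^m_{\pi}(M)$ and $H^{2n}(\mathcal{F})$ with the same quotient of $C^{\infty}(M)$, using the distinguished volume form provided by unimodularity and the leafwise Liouville form provided by the leafwise symplectic structure.

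First, because $M$ is orientable and $(M,\pi)$ is unimodular, we can pick a volume form $\Omega$ whose modular vector field vanishes. Xu's Poincar\'e duality for compact unimodular Poisson manifolds then furnishes an isomorphism
\[
H^{m}_{\pi}(M)\;\cong\;H_{0}^{\pi}(M)\;=\;C^{\infty}(M)/\mathcal{B},
\]
where $\mathcal{B}\subseteq C^{\infty}(M)$ is the linear span of all Poisson brackets $\{f,g\}$. This already reduces the problem to computing a concrete quotient of smooth functions.

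Second, on the foliated side the rank-$2n$ leafwise symplectic form $\omega$ gives a nowhere-vanishing Liouville section $\omega^{n}/n!$ of $\wedge^{2n}T^{*}\mathcal{F}$, producing an isomorphism of $C^{\infty}(M)$-modules $\Psi\colon C^{\infty}(M)\to\Omega^{2n}(\mathcal{F})$, $h\mapsto h\cdot\omega^{n}/n!$. A short leafwise Cartan-formula computation, parallel to the symplectic one, gives
\[
\Psi(\{f,g\})\;=\;\{f,g\}\cdot\omega^{n}/n!\;=\;d_{\mathcal{F}}\!\left(f\cdot\iota_{X_{g}}(\omega^{n}/n!)\right),
\]
so $\Psi(\mathcal{B})$ is contained in the space of $d_{\mathcal{F}}$-exact top $\mathcal{F}$-forms. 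Consequently $\Psi$ descends to a well-defined linear map $\overline{\Psi}\colon C^{\infty}(M)/\mathcal{B}\to H^{2n}(\mathcal{F})$, and composing with the duality of Step~1 produces the candidate map $H^{m}_{\pi}(M)\to H^{2n}(\mathcal{F})$.

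Finally one must verify that $\overline{\Psi}$ is bijective. Working in foliated Weinstein-splitting coordinates, every leafwise $(2n{-}1)$-form is, modulo further leafwise-exact pieces, a combination of terms of the shape $f\cdot\iota_{X_{g}}(\omega^{n}/n!)$, so every leafwise-exact top form coincides up to exact corrections with $\{f,g\}\cdot\omega^{n}/n!$ for some $f,g$; a partition-of-unity argument on the compact manifold $M$ then identifies $\ker\overline{\Psi}$ with $\mathcal{B}$ and yields surjectivity. The main obstacle is this last global step: the local correspondence between Poisson brackets and leafwise-exact top forms is transparent in a Weinstein chart, but globalizing it genuinely requires both the compactness of $M$, to patch local primitives together, and unimodularity, without which Xu's duality fails and the whole reduction to $C^{\infty}(M)/\mathcal{B}$ collapses.
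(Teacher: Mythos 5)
The paper itself gives no proof of this statement---it is recalled verbatim from Martinez-Torres and Miranda \cite{davidandeva}---and your argument is, in substance, the proof from that reference: Xu's Poincar\'e duality for orientable unimodular Poisson manifolds reduces $H^m_{\pi}(M)$ to $H_0^{\pi}(M)=C^{\infty}(M)/\{C^{\infty}(M),C^{\infty}(M)\}$, and the nowhere-vanishing leafwise Liouville section $\omega^n/n!$ identifies this quotient with $H^{2n}(\mathcal F)$ via exactly the Cartan-formula and Darboux/partition-of-unity steps you sketch. The outline is correct; the only cosmetic point is that surjectivity of $\overline{\Psi}$ is immediate (every top foliated form is $h\,\omega^n/n!$ for some $h$ and is automatically $d_{\mathcal F}$-closed), so the final global step is really only about identifying $\ker\overline{\Psi}$ with $\mathcal B$, as you indicate.
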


In particular by applying theorem \ref{thm:lovefoliations} we obtain,

\begin{theorem} Let $M^3$ be a regular compact Poisson manifold of corank $1$, and let $\mathcal F$ be its symplectic foliation. If $\Pi_0, \Pi_1$ are two Poisson structures  with the same class in Poisson cohomology  and the same orientation, then there is a $\phi$ preserving the symplectic foliation  such that $\phi_*(\Pi_0) = \Pi_1$.
\end{theorem}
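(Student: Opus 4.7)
The plan is to translate the Poisson statement into the language of leafwise symplectic forms and then invoke Theorem \ref{thm:lovefoliations}. Since $M^3$ is regular of corank $1$, the symplectic foliation $\mathcal F$ has $2$-dimensional leaves and each $\Pi_i$ is leafwise nondegenerate. Hence $\Pi_i$ corresponds to a leafwise symplectic form $\omega_i \in \Omega^2(\mathcal F)$ via leafwise inversion. The hypothesis that $\Pi_0$ and $\Pi_1$ induce the same orientation translates to $\omega_0$ and $\omega_1$ inducing the same orientation on $\mathcal F$, and convexity on $2$-dimensional leaves ensures that any convex combination $(1-t)\omega_0 + t\omega_1$ remains leafwise symplectic with the same orientation.

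The crux is to deduce $[\omega_0] = [\omega_1]$ in $H^2(\mathcal F)$ from $[\Pi_0] = [\Pi_1]$ in Poisson cohomology. For a regular Poisson manifold, leafwise contraction with $\Pi$ produces an isomorphism of complexes $(\Gamma(\wedge^\bullet T\mathcal F), d_\Pi) \cong (\Omega^\bullet(\mathcal F), d_{\mathcal F})$ under which $[\Pi]$ corresponds to $[\omega]$. Since $M$ is compact and orientable of dimension $3$ and $\mathcal F$ has codimension $1$ with oriented (symplectic) leaves, the conormal line bundle of $\mathcal F$ is orientable, which yields the unimodularity of $\Pi$. Unimodularity allows Theorem \ref{cor:1} to identify the relevant Poisson and foliated cohomology groups; combining this identification with the complex isomorphism above gives $[\omega_0] = [\omega_1]$.

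With $[\omega_0] = [\omega_1]$ in $H^2(\mathcal F)$ and matching leafwise orientation, Theorem \ref{thm:lovefoliations} supplies a foliation-preserving diffeomorphism $\phi: M \to M$ satisfying $\phi^*\omega_1 = \omega_0$. Since $\phi$ preserves $\mathcal F$, it commutes with the leafwise inversion $\omega_i \leftrightarrow \Pi_i$, and the pullback equality for forms becomes the pushforward equality $\phi_* \Pi_0 = \Pi_1$ for bivectors, as required.

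The main obstacle is the cohomological bookkeeping in the second paragraph. Theorem \ref{cor:1} is stated in top degree ($H^3_\pi(M) \cong H^2(\mathcal F)$), whereas $[\Pi_i]$ and $[\omega_i]$ live in degree $2$ of their respective complexes. Making the reduction rigorous therefore requires either showing that the leafwise-contraction isomorphism of complexes passes to cohomology in degree $2$ under the unimodularity assumption, or extracting a degreewise refinement of Theorem \ref{cor:1} from the splitting $TM \cong T\mathcal F \oplus \nu$ available in the regular corank-$1$ setting, and then verifying that the resulting identification sends $[\Pi_i] \mapsto [\omega_i]$.
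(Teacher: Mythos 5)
Your proof takes the same route as the paper, which derives this theorem in one line by combining Theorem \ref{cor:1} with the foliated Moser theorem (Theorem \ref{thm:lovefoliations}). The degree mismatch you flag at the end is real and is not resolved in the paper either: Theorem \ref{cor:1} only identifies $H^{3}_{\pi}(M)$ with $H^{2}(\mathcal F)$, while $[\Pi_i]$ lives in degree $2$ of the Poisson complex, so the degreewise refinement you propose (or a direct argument that the chain map induced by $\Pi^{\#}$ identifies $[\Pi_1]-[\Pi_0]$ with $[\omega_1]-[\omega_0]$) is genuinely needed and is not something you are missing from the paper's argument.
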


This global classification of $E$-symplectic objects in dimension $2$ using a Moser theorem can be used to classify Nambu structures of top degree for manifolds in any dimension as it was done in \cite{mirandaplanas} to obtain,

\begin{theorem}[Miranda and Planas, \cite{mirandaplanas}]\label{thm:bnnambu} Let $\Theta_0$ and $\Theta_1$ be two $b^m$-Nambu forms of degree $n$ on a compact orientable manifold $M^n$.
 If $[\Theta_0] = [\Theta_1]$ in $b^m$-cohomology then there exists a diffeomorphism $\phi$ such that $\phi^{*}\Theta_1 = \Theta_0$.

\end{theorem}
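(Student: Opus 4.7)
The plan is to adapt the $E$-Moser argument of Theorem \ref{thm:moser} to top-degree forms, taking $E\in EVect(M)$ to be the module of vector fields with order-$m$ tangency to $Z$, so that ${}^E\Omega^*(M)$ is the complex of $b^m$-forms and ${}^EH^*(M)$ is $b^m$-cohomology. The key difference from Theorem \ref{thm:moser} is that $\Theta_0,\Theta_1$ are top degree, which has two consequences: they are automatically $d$-closed, and ``nondegeneracy'' is simply the condition of being nowhere-vanishing as a section of the line bundle $\wedge^n({}^ET^*M)$, that is, being a $b^m$-Nambu form.

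First I would form the linear interpolation $\Theta_t=(1-t)\Theta_0+t\Theta_1$ and show it stays a $b^m$-Nambu form for every $t\in[0,1]$. Since $\wedge^n({}^ET^*M)$ has one-dimensional fibers, this reduces to showing that $\Theta_0$ and $\Theta_1$ induce the same $b^m$-orientation. I would derive this from the hypothesis $[\Theta_0]=[\Theta_1]$: writing $\Theta_1-\Theta_0=d\mu$ for some $\mu\in{}^E\Omega^{n-1}(M)$, a Stokes-type argument applied to the appropriate truncations of $M$ near $Z$ (in the spirit of the Liouville volume used in $b^m$-geometry, where the integrals of $b^m$-forms must be interpreted as principal values) forces the two forms to produce integrals of matching sign on each connected component of $M\setminus Z$, ruling out opposing orientations. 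This orientation/non-vanishing step is the principal obstacle, since without the cohomological hypothesis a purely algebraic check fails, and the analysis near $Z$ is delicate because of the singular behavior of $b^m$-forms.

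Once $\Theta_t$ is known to be a $b^m$-Nambu form throughout $[0,1]$, the Moser step is essentially formal. Because $\Theta_t$ is top-degree and nowhere-vanishing, the contraction map $X\mapsto \iota_X\Theta_t$ is a pointwise isomorphism ${}^ETM\to \wedge^{n-1}({}^ET^*M)$, so there is a unique time-dependent $E$-vector field $X_t$ satisfying $\iota_{X_t}\Theta_t=-\mu$. Since $\Theta_t$ is closed, the Cartan formula collapses to $\mathcal{L}_{X_t}\Theta_t = d\iota_{X_t}\Theta_t = -(\Theta_1-\Theta_0) = -\tfrac{d}{dt}\Theta_t$. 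By compactness of $M$, the $X_t$ integrate to a global isotopy $\rho_t \in \mathrm{Aut}(M,E)$ with $\rho_t^*\Theta_t=\Theta_0$ for all $t\in[0,1]$, and $\phi=\rho_1$ is the desired diffeomorphism.
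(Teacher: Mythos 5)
First, a point of comparison: the paper does not actually prove Theorem \ref{thm:bnnambu}; it is imported from \cite{mirandaplanas}, so there is no in-text proof to measure you against. Your overall strategy --- run the $E$-Moser argument of Theorem \ref{thm:moser} on the linear path $\Theta_t=(1-t)\Theta_0+t\Theta_1$, using that top-degree $E$-forms are automatically closed and that nondegeneracy is just nonvanishing in the line bundle $\wedge^n({}^ET^*M)$ --- is the right one, and your Moser step itself (defining $X_t$ by $\iota_{X_t}\Theta_t=-\mu$ and integrating) is correct.

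The genuine gap is in the step you yourself flag as the principal obstacle: showing $\Theta_t$ stays nonvanishing. You propose to deduce that $\Theta_0$ and $\Theta_1$ induce the same orientation from a Stokes argument showing their regularized integrals have matching sign on each connected component of $M\setminus Z$. This fails for two reasons. (i) Component-wise regularized volumes are not $b^m$-cohomology invariants: for $\mu$ smooth and supported near $Z$, $\int_{U_\epsilon}d\mu$ tends to a boundary term that need not vanish, so adding an exact form redistributes volume among the components of $M\setminus Z$. (ii) More seriously, for $m\geq 2$ the truncated integrals diverge like $\epsilon^{1-m}$, and the finite part of the regularization of a form that is \emph{positive} on a component can be negative (the finite part of $\int_\epsilon^1 x^{-2}\,dx=\epsilon^{-1}-1$ is $-1$), so the sign of the regularized volume does not detect orientation at all. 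The repair is pointwise and uses the residue: since $\Theta_0,\Theta_1$ are nonvanishing sections of a line bundle, $\Theta_1=f\Theta_0$ with $f$ nonvanishing, hence of constant sign on each connected component of $M$ (not merely of $M\setminus Z$). If $f<0$ somewhere, then $(1-f)\Theta_0$ is a $b^m$-Nambu form there that is exact in $b^m$-cohomology; but its residue along $Z$ is a volume form on the compact hypersurface $Z$, hence has nonzero class in $H^{n-1}(Z)$, contradicting the vanishing of the residue class of an exact form in the $b^m$ Mazzeo--Melrose decomposition (and if $Z=\emptyset$ one contradicts instead the nonvanishing of the total volume of a volume form on a compact manifold). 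With the orientation step argued this way, the rest of your proof goes through.
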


This classification theorem generalizes that in \cite{david}.

\section{ A case-study: c-manifolds}
In the definition of a $b$-manifold $(M, Z)$, the hypersurface $Z$ is assumed to be smoothly embedded. In this section, we generalize this theory by allowing the hypersurface to have transverse self-intersections as considered in  \cite{gmps} (see v1 in arxiv) and \cite{gualtierietal}.

For these $E$-manifolds we can compute explicitly its $E$-cohomology and prove a Mazzeo-Melrose theorem.
\subsection{Self-transverse immersions and $c$-geometry}
\begin{definition} Let $Z$ and $M$ be smooth manifolds. An immersion $i: Z \rightarrow M$ is \textbf{self-transverse} if whenever $p_1, \dots, p_n \in Z$ are distinct points mapping to the same point of $M$, the planes $\{i_*(T_{p_i}M)\}$ are in general position.
\end{definition}

\begin{definition} A self-transverse immersion $i: Z \rightarrow M$ has \textbf{multiple point manifolds}, for each $k \geq 1$, given by
\begin{align*}
Z_k &= \{(p_1, \dots, p_k) \in Z^k \mid p_i \ \textrm{distinct}, \ i(p_i) = i(p_j)\}\\
\overline{Z_k} &= Z_k / S_k
\end{align*}
where the symmetric group $S_k$ acts on $Z_k$ by permuting the $p_i$'s. For $k > \ell$, there are also self-transverse immersions
\begin{align*}
Z_k &\rightarrow Z_{\ell} \hspace{2.5cm} \textrm{and}   &Z_k &\rightarrow M\\
(p_1, \dots, p_k) &\mapsto (p_1, \dots, p_\ell)  &(p_1, \dots, p_k) &\mapsto i(p_1)
\end{align*}
These maps are equivariant with respect to the actions of the symmetric groups, hence define self-transverse immersions $\overline{Z_k} \rightarrow \overline{Z_{\ell}}$ and $\overline{Z_k} \rightarrow M$. To simplify notation, we set $Z_0 := M$.
\end{definition}

\begin{example} If $S^1$ is mapped into $\mathbb{R}^2$ as a figure eight, then there are two points $p_1, p_2 \in S^1$ that are identified under this map. Then $Z_2$ consists of two points, $(p_1, p_2), (p_2, p_1) \in S^1 \times S^1$, while $\overline{Z_2}$ consists of just one point.
\end{example}

\begin{definition}
A $c$-manifold is a manifold $M$ together with a self-transverse immersion $i$ of a hypersurface $Z$ into $M$. A vector field $v$ on $c$-manifold $M$ is a \textbf{$c$-vector field} if for each $p \in Z$, $v_{i(p)} \in i_*(T_pZ)$.
\end{definition}

The set of $c$-vector fields forms an element of $EVect(M)$, and we denote the corresponding $E$-tangent and $E$-cotangent bundles by ${^c}TM$ and ${^c}T^*M$, respectively. Around every point of $M$ in the image of $Z_k$ but not $Z_{k+1}$, there are coordinates $(x_1, \dots, x_n)$ for which the image of $Z$ is $\cup_{i \leq k} \{x_i = 0\}$. In these coordinates, $^cTM$ is generated as a $C^{\infty}$ module by
\[
\left\{ x_1\frac{\partial}{\partial x_1}, \dots,  x_k\frac{\partial}{\partial x_k}, \frac{\partial}{\partial x_{k+1}}, \dots, \frac{\partial}{\partial x_n}\right\}
\]
and $^cT^*M$ is generated by the dual basis
\[
\left\{ \frac{dx_1}{x_1}, \dots,  \frac{dx_k}{x_k}, dx_{k+1}, \dots, dx_n\right\}.
\]
In this setting, we call an $E$-symplectic form a \textbf{$c$-symplectic form}.

\begin{example}\label{ex:csymplecticsphere}
In this example, we describe a $c$-symplectic structure on $S^4$. For $1 \leq i \leq 5$, consider the following open sets of the unit sphere $S^4 \subseteq \mathbb{R}^5$:
\begin{align*}
U_i^+ &= \{(y_1, \dots, y_5) \in S^4 \mid y_i > 0\}\\
U_i^- &= \{(y_1, \dots, y_5) \in S^4 \mid y_i < 0\}
\end{align*}
A line passing through the origin in $\mathbb{R}^5$ and fixed point in $U_i^+$ intersects the plane $\mathbb{R}^4 \cong \{y_i = 1\} \subseteq \mathbb{R}^5$ in exactly one point; the projection defined in this way gives local coordinates on $U_i^+$. Likewise, the projection through the origin onto the plane $\mathbb{R}^4 \cong \{y_i = -1\} \subseteq \mathbb{R}^5$ gives local coordinates on each $U_i^-$. These coordinates are just the pullback of the standard coordinates on $\mathbb{R}P^4$ by the double covering $S^4 \rightarrow \mathbb{R}P^4$. Below are examples of two change of coordinate maps between these charts.

\begin{align*}
\left\{ (x_2, x_3, x_4, x_5) \in U_1^+ \mid x_2 > 0 \right\} \ \ \ \ \ & \left\{ (x_1, x_3, x_4, x_5) \in U_2^+ \mid x_1 > 0 \right\}\\
(x_2, x_3, x_4, x_5) \mapsto & x_2^{-1} \left( 1, x_3, x_4, x_5\right)\\
x_1^{-1}\left( 1, x_3, x_4, x_5\right) \mapsfrom &(x_1, x_3, x_4, x_5)\\
\end{align*}
\vspace{-0.5cm}
\begin{align*}
\left\{ (x_2, x_3, x_4, x_5) \in U_1^+ \mid x_2 < 0 \right\}  \ \ \ \ \ & \left\{ (x_1, x_3, x_4, x_5) \in U_2^- \mid x_1 > 0 \right\}\\
(x_2, x_3, x_4, x_5) \mapsto & -x_2^{-1}\left( 1, x_3, x_4, x_5\right)\\
x_1^{-1}\left( -1, x_3, x_4, x_5\right) \mapsfrom &(x_1, x_3, x_4, x_5)\\
\end{align*}

Let $v_j$ for $1 \leq j \leq 5$ be the vector fields on $S^4$ defined as follows on these coordinate charts.
\[
v_j = \left\{\begin{array}{r l} x_j\frac{\partial}{\partial x_j} & \textrm{on $U_i^+$ and $U_i^-$ for $i \neq j$}\\ \sum_{i \neq j} -x_i\frac{\partial}{\partial x_i} & \textrm{on $U_j^+$ and $U_j^-$} \end{array} \right.
\]

Consider the bivector $\Pi = (v_1 + v_2)\wedge(v_2 + v_3) + (v_3 + v_4) \wedge (v_4 + v_5)$ on $S^4$. Because the $v_i$ vector fields commute with one another, $[\Pi, \Pi] = 0$. Also, $\Pi \wedge \Pi = 2(\sum_{i = 1}^5 v_1 \wedge \dots \wedge \hat{v_i} \wedge \dots v_5$. Together with the fact that $v_i = -\sum_{j \neq i} v_j$, one can verify that $\Pi$ is dual to a $c$-symplectic form on each coordinate chart. For example, on $U_1^{+}$ and $U_1^-$, it is given by

\begin{align*}
\textrm{On} \  U_1^+ \ \textrm{and} \ U_1^-: \ \  &x_2x_3\frac{\partial}{\partial x_2}\wedge\frac{\partial}{\partial x_3} + x_2x_4\frac{\partial}{\partial x_2}\wedge\frac{\partial}{\partial x_4}\\
& \ \ \ \ + 2x_3x_4\frac{\partial}{\partial x_3}\wedge\frac{\partial}{\partial x_4} + x_2x_5\frac{\partial}{\partial x_2}\wedge\frac{\partial}{\partial x_5}\\
& \ \ \ \ + 2x_3x_5\frac{\partial}{\partial x_3}\wedge\frac{\partial}{\partial x_5} + x_4x_5\frac{\partial}{\partial x_4}\wedge\frac{\partial}{\partial x_5}\\
\end{align*}

This illustrates that $S^4$ admits a $c$-symplectic structure.
\end{example}
The main theorem of \cite{marcutosorno1} implies that for $n \geq 4$, any compact manifold of dimension $n$ admitting a $b$-symplectic structure must have a nontrivial class in $H^2(M)$. Example \ref{ex:csymplecticsphere} shows that a manifold may admit a $c$-symplectic structure even if it does not admit a $b$-symplectic structure.\\

To better understand ${^c}H^*(M)$, we hope for a result similar to the canonical Mazzeo-Melrose isomorphism in $b$-geometry, in which case $Z$ is embedded and
\begin{equation}\label{eqn:bmazzeomelrose}
{^b}H^p(M) \cong H^p(M) \oplus H^{p-1}(Z).
\end{equation}
Towards this goal, in the following two sections, we will generalize the definition of a \emph{Liouville volume} and the \emph{residue} of a $b$-form.

\subsection{The Liouville volume of a $c$-form}

Recall that the \emph{Liouville volume} of a $b$-form $\omega$ of top degree is defined as
\[
\lim_{\epsilon \rightarrow 0}\int_{M \backslash \{|y| \leq \epsilon\}} \omega
\]
where $y$ is a local defining function for $Z$. This number is well-defined and independent of the choice of $y$. Because the hypersurface of a $c$-manifold might not be embedded and might not be coorientable, it may have no local defining function in the traditional sense.

\begin{definition}\label{def:normalbundle} Let $W$ either be $Z_k$ for some $k \geq 1$, or a connected component of $Z_k$, and let $i: W \rightarrow M$ be its inclusion into $M$. The \textbf{normal bundle} of $W$ is the bundle $\pi: N_W \rightarrow W$ whose fiber over $p \in W$ is given by $T_{i(p)}M / i_{*}(T_pW)$.
\end{definition}
\begin{remark}\label{rmk:subbund}
Let $W$ be as in Definition \ref{def:normalbundle}, and let $p = (p_1, \dots, p_k) \in W$. For each $j$, $i_{*}(T_{p_j}Z)$ is a hyperplane in $T_{i(p)}M$ containing $i_{*}(T_pW)$, hence defines a hyperplane in $(N_W)_p$. We will denote by $H_j$ the codimension-1 subbundle of $N_W$ defined by the hyperplanes $i_*(T_{p_j}Z) \subseteq T_{i(p)}M$. Because $i: Z \rightarrow M$ is self-transverse, these $H_j$ are transverse subbundles of $N_W$.
\end{remark}
\begin{definition} Let $W$ be as in Definition \ref{def:normalbundle}. A \textbf{tubular neighborhood} of $W$ is a map $\phi: N_W \rightarrow M$ such that
\begin{itemize}
\item Around every $p \in W$, there is an open neighborhood $U \subseteq W$ such that the map $\restr{\phi}{\pi^{-1}(U)}: \pi^{-1}(U) \rightarrow M$ is a usual tubular neighborhood of $\restr{i}{U}: U \rightarrow M$
\item For each $j$, the map $\restr{\phi}{H_j}: H_j \rightarrow M$ factors through a map $H_j \rightarrow Z$ that sends the zero section over $p = (p_1, \dots, p_k)$ to $p_j \in Z$.
\end{itemize}
\end{definition}

Figure \ref{fig:normalbundle} below on the right shows a self-transverse map of a cylinder and a plane into $\mathbb{R}^3$. The double point manifold $Z_2$ consists of two circles and two lines. One if these circles is shown below, together with an illustration of the codimension-1 subbundles of $N_M$.

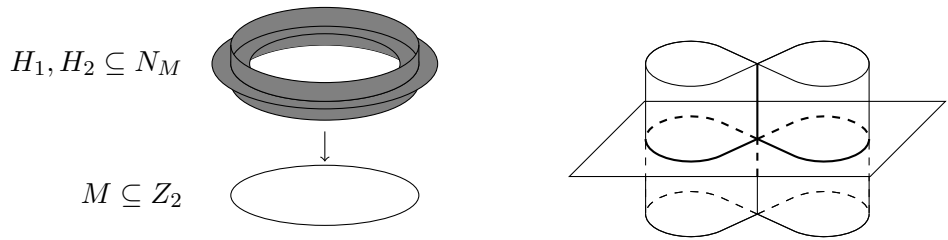
\begin{figure}[ht]
\centering
\begin{tikzpicture}

\begin{scope}[shift={(0, 1)}]
	\draw (0, 0) -- (-0.5, 0.23)  arc (50:310:0.6cm and 0.3cm) -- (0, 0);
	\draw (0, 0) -- (0.5, 0.23) arc (130:-130:0.6cm and 0.3cm) -- (0, 0);
	
	\draw (0, 0) -- (-0.5, 0.23)  arc (50:180:0.6cm and 0.3cm) -- ++(0, -1);
	\draw (0, 0) -- (0.5, 0.23)  arc (130:0:0.6cm and 0.3cm) -- ++(0, -1);
	\draw[thick] (0, 0) -- ++(0, -1);
\end{scope}

\begin{scope}[shift={(0, 0)}]
	\draw[dashed, thick] (0, 0) -- (-0.5, 0.23)  arc (50:180:0.6cm and 0.3cm);
	\draw[thick] (0, 0) -- (-0.5, -0.23) arc (310:180:0.6cm and 0.3cm);
	\draw[dashed, thick] (0, 0) -- (0.5, 0.23) arc (130:0:0.6cm and 0.3cm);
	\draw[thick] (0, 0) -- (0.5, -0.23) arc  (-130:0:0.6cm and 0.3cm);
\end{scope}

\begin{scope}[shift={(0, -1)}]
	\draw[dashed] (0, 0) -- (-0.5, 0.23)  arc (50:180:0.6cm and 0.3cm);
	\draw (0, 0) -- (-0.5, -0.23) arc (310:180:0.6cm and 0.3cm);
	\draw[dashed] (0, 0) -- (0.5, 0.23) arc (130:0:0.6cm and 0.3cm);
	\draw (0, 0) -- (0.5, -0.23) arc  (-130:0:0.6cm and 0.3cm);
	
	\draw[dashed] (0, 0) -- (-0.5, 0.23)  arc (50:180:0.6cm and 0.3cm) -- ++(0, 1);
	\draw (0, 0) -- (-0.5, -0.23) arc (310:180:0.6cm and 0.3cm) -- ++(0, 0.5);
	\draw[dashed] (0, 0) -- (0.5, 0.23)  arc (130:0:0.6cm and 0.3cm) -- ++(0, 1);
	\draw (0, 0) -- (0.5, -0.23) arc  (-130:0:0.6cm and 0.3cm) -- ++(0, 0.5);
	
	\draw (0, 0) -- ++(0, 0.5);
	\draw[dashed, thick] (0, 0.5) -- ++(0, 0.5);
\end{scope}

\draw (-2.5, -0.5) -- ++(1, 1) -- ++(4, 0) -- ++(-1, -1) -- cycle;

\begin{scope}[shift={(-6, 1)}]

	\fill[gray] (-1, -0.25) arc(-180:0:1.25cm and 0.5cm) -- ++(0, 0.25) arc(0:-180:1.25cm and 0.5cm) -- cycle;
	\fill[gray] (-1, -0.25) arc(180:0:1.25cm and 0.5cm) -- ++(0, 0.25) arc(0:180:1.25cm and 0.5cm) -- cycle;

	\draw (-1, -0.25) arc(-180:180:1.25cm and 0.5cm);
	
	\fill[gray] (-1.25, 0) arc(-180:180:1.5cm and 0.6cm) -- ++(0.5, 0) arc (180:-180:1cm and 0.4cm) -- cycle;
	\fill[gray] (-1, -0.25) arc(180:0:1.25cm and 0.5cm) -- ++(0, 0.25) arc(0:180:1.25cm and 0.5cm) -- ++(0, -0.25) -- cycle;
	
	\draw (-0.75, 0) arc(-180:180:1cm and 0.4cm);
	\draw (-1.25, 0) arc(-180:180:1.5cm and 0.6cm);
	
	\fill[gray] (-1, 0.25) arc(180:0:1.25cm and 0.5cm) -- ++(0, -0.25) arc(0:180:1.25cm and 0.5cm) -- cycle; 
	\draw (-1, 0) arc(-180:180:1.25cm and 0.5cm); 
	\fill[gray] (-1, 0.25) arc(-180:0:1.25cm and 0.5cm) -- ++(0, -0.25) arc(0:-180:1.25cm and 0.5cm) -- cycle;
	\draw (-1, 0) arc(-180:0:1.25cm and 0.5cm); 

	\draw (-1, 0.25) arc(-180:180:1.25cm and 0.5cm); 
	\draw (-1, 0) -- (-1, 0.25);
	\draw (1.5, 0) -- (1.5, 0.25);
	
	\draw[->] (0.25, -0.9) -- ++(0, -0.4);
	
	\draw (-1.5, 0) node[left] {$H_1, H_2 \subseteq N_M$} ++(0, -0.5);

\end{scope}

\begin{scope}[shift={(-6, -0.75)}]
	\draw (-1, 0) arc(-180:180:1.25cm and 0.4cm);
	\draw (-1.5, 0) node[left] {$M \subseteq Z_2$} ++(0, -0.5);
\end{scope}

\end{tikzpicture}
\caption{A normal bundle of a component of $Z_2$ with two codimension-1 subbundles}
\label{fig:normalbundle}
\end{figure}

\begin{definition}Let $(M, Z)$ be a $c$-manifold. Pick a tubular neighborhood $\phi: N_Z \rightarrow M$ of $Z$, pick an auxillary metric on $N_Z$, and let $U_{\epsilon} := \phi(\{v \in N_Z \mid |v| < \epsilon\})$. The \textbf{Liouville volume} of a compactly supported $c$-form $\omega$ of top degree is
\begin{equation} \label{eqn:louvillevolumecdensity}
\cint{M}{}\omega = \lim_{\epsilon \rightarrow 0} \int_{M \backslash U_{\epsilon}} \omega
\end{equation}
\end{definition}

\begin{proposition} \label{prop:liouvillevolumefinite} The Liouville volume of a compactly supported $c$-form is finite and independent of the choice of tubular neighborhood and auxillary metric used to define $U_{\epsilon}$
\end{proposition}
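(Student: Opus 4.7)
The plan is to localize via a partition of unity and reduce to a one-dimensional integral that converges by symmetric cancellation around $x_j = 0$. Cover $M$ by charts in which either $i(Z)$ is absent, or there are coordinates $(x_1, \dots, x_n)$ in which $i(Z) = \bigcup_{j=1}^{k} \{x_j = 0\}$ for some $k \geq 1$; such charts exist by the normal form for a self-transverse immersion. A subordinate partition of unity reduces the problem to the case of a compactly supported $c$-form $\omega$ in a single chart. Charts disjoint from $i(Z)$ contribute ordinary smooth integrals, so it suffices to consider a chart of multiplicity $k \geq 1$.

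In such a chart, expand $\omega$ in the standard basis of $c$-forms as
$$
\omega = \sum_{S \subseteq \{1, \dots, k\}} \Bigl(\bigwedge_{j \in S} \frac{dx_j}{x_j}\Bigr) \wedge \beta_S,
$$
with each $\beta_S$ smooth and compactly supported. The sheet-compatibility axiom in the definition of tubular neighborhood implies that, after shrinking the chart if necessary, $U_\epsilon$ intersected with the chart is $\bigcup_{j=1}^{k} \{|x_j| < \epsilon\, r_j\}$ for positive smooth bounded functions $r_j$ (depending on $\phi$ and the metric), so its complement is $\bigcap_{j=1}^{k} \{|x_j| \geq \epsilon\, r_j\}$. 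For $S = \emptyset$ the integrand is smooth and compactly supported, so the integral tends to $\int \beta_\emptyset$ as $U_\epsilon$ shrinks to a set of measure zero. For $S \neq \emptyset$, Fubini reduces the integral to iterated one-dimensional integrals; the innermost integrals in the variables $\{x_j\}_{j \in S}$ each take the form
$$
\int_{|x_j| \geq \epsilon r_j} \frac{f(x_j)}{x_j}\, dx_j = \int_{\epsilon r_j}^{L} \frac{f(x_j) - f(-x_j)}{x_j}\, dx_j,
$$
where $L$ bounds the support and the remaining variables are held fixed. Because $f(x_j) - f(-x_j)$ vanishes to first order at $0$, the integrand extends smoothly across $x_j = 0$, and the integral converges absolutely to $\int_0^L [f(x_j) - f(-x_j)]/x_j\, dx_j$ as $\epsilon \to 0$. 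Summing over $S$ yields finiteness.

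Independence follows from the same computation: the limiting value of each one-dimensional integral does not depend on the positive function $r_j$, so different choices of $\phi$ and of auxiliary metric---which only alter the $r_j$---produce the same Liouville volume. The main obstacle to address carefully is the local description of $U_\epsilon$ near a multiplicity-$k$ point as a union of tubular neighborhoods of the $k$ local sheets; this follows from the sheet-compatibility condition in the definition of tubular neighborhood, which forces the restriction of $\phi$ to the preimage of each local sheet to be an ordinary tubular neighborhood of that sheet. A minor technical point is that $r_j$ may depend on $x_j$ as well, but the symmetric difference between $\{|x_j| \geq \epsilon r_j(x)\}$ and $\{|x_j| \geq \epsilon r_j(x_{\neq j},0)\}$ is a set of measure $O(\epsilon^2)$ on which the integrand is $O(1/\epsilon)$, contributing $O(\epsilon)$ which vanishes in the limit.
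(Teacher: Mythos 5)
Your argument is correct in substance and, for finiteness, uses essentially the same mechanism as the paper: both localize with a partition of unity and both rely on the parity cancellation of $dx_j/x_j$ over a symmetric punctured interval. The paper packages this as a Taylor expansion of the coefficient (the term constant in $x_1,\dots,x_r$ integrates to something finite by Fubini, the terms divisible by some $x_i$ lose a singular factor, and one inducts on the order of the singularity), whereas you extract the odd part $f(x_j)-f(-x_j)$ directly and get an absolutely convergent symmetrized integrand in one step; note that for a top-degree $c$-form your ``expansion in the standard basis'' over subsets $S$ is really this same Taylor decomposition of the single coefficient function, since $\Lambda^n({^c}T^*M)$ has rank one. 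The genuine difference is in the independence argument: the paper first normalizes $U_\epsilon$ to $\bigcup_j\{|x_j|<\epsilon\}$ by a diffeomorphism preserving $Z$ and then changes one hyperplane at a time, quoting the known independence of the $b$-Liouville volume on the choice of defining function, while you keep the general region $\bigcup_j\{|x_j|<\epsilon r_j\}$ and check directly that each one-dimensional limit is insensitive to $r_j$. This is more self-contained, but two points need tightening. First, everything hinges on $U_\epsilon$ being symmetric about each local sheet to leading order, i.e.\ on a single $r_j$ being valid on both sides of $\{x_j=0\}$: an asymmetric shrinking with radii $r_j^{\pm}$ would shift the answer by $g(0)\log(r_j^+/r_j^-)$. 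This symmetry holds because $\phi(\{|v|<\epsilon\})$ is locally $\{|h_j|<\epsilon\}$ for a smooth defining function $h_j=x_j u_j$ with $u_j>0$ continuous across the sheet, and it is precisely the content of the independence statement, so it should be made explicit rather than absorbed into ``sheet-compatibility.'' Second, your estimate ``measure $O(\epsilon^2)$ times integrand $O(1/\epsilon)$'' is only valid for terms with one singular factor; for $|S|\ge 2$ the remaining singular variables contribute an extra factor of order $\log^{|S|-1}(1/\epsilon)$, so the perturbation is $O(\epsilon\log^{|S|-1}(1/\epsilon))$ --- still vanishing, but not by the bound as written. Neither issue is fatal, and both are repairable within your framework.
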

\begin{proof}
By passing to a partition of unity, it suffices to prove the claim when $\omega$ is supported in $M = [-1, 1]^n \subseteq \mathbb{R}^n$ where $i(Z) = \cup_{i \leq r} \{x_i = 0\}$. If $r = 0$, $\omega$ is a smooth form and there is nothing to prove. If $r > 0$, fix a tubular neighborhood of $Z$ and auxillary metric on $N_Z$; after a diffeomorphism of $\mathbb{R}^n$ preserving $Z$ we may assume that $U_{\epsilon} = \cup_{i = 1}^r \{|x_i| < \epsilon\}$ for small $\epsilon$. There is some $f \in C^{\infty}(\mathbb{R}^n)$ such that
\[
\omega = f \frac{dx_1}{x_1}\wedge \dots \wedge \frac{dx_r}{r} \wedge dx_{r+1} \wedge \dots \wedge dx_n
\]
By Taylor's theorem, there are smooth functions $f_i$ such that
\[
f = f(0, \dots, 0, x_{r+1}, \dots, x_n) + \sum_{i = 1}^rx_if_i.
\]
Then
\begin{align*}
\int_{M \backslash U_{\epsilon}} &f \frac{dx_1}{x_1}\wedge \dots \wedge \frac{dx_r}{r} \wedge dx_{r+1} \wedge \dots \wedge dx_n\\
&= \int_{M \backslash U_{\epsilon}} f(0, \dots, 0, x_{r+1}, \dots, x_{n}) \frac{dx_1}{x_1}\wedge \dots \wedge \frac{dx_r}{r} \wedge dx_{r+1} \wedge \dots \wedge dx_n \\
& \ \ \ + \sum_{i = 1}^r\int_{M \backslash U_{\epsilon}} f_i \frac{dx_1}{x_1}\wedge \dots \wedge dx_i \wedge \dots \wedge \frac{dx_r}{r} \wedge dx_{r+1} \wedge \dots \wedge dx_n \\
\end{align*}
The first term is finite by Fubini's theorem; the second is finite by induction on the order of the singularity of the differential form. It remains to show that this does not depend on the choice of tubular neighborhood or metric. Suppose we change the tubular neighborhood or metric just for the hyperplane $\{x_1 = 0\}$, and denote by $\widetilde{U_{\epsilon}}$ the neighborhood $U_{\epsilon}$ obtained using these new choices. The formula for $\widetilde{U_{\epsilon}}$ can be written
\[
\widetilde{U_{\epsilon}} = \{|h(x_1, \dots, x_n)| < \epsilon\} \cup \bigcup_{i = 2}^r \{|x_i| < \epsilon\}
\]
for $h$ a defining function of $\{x_1 = 0\}$. Then

\begin{align*}
\lim_{\epsilon \rightarrow 0} \int_{M \backslash \widetilde{U_{\epsilon}}} \omega &= \lim_{\epsilon_2 \rightarrow 0} \lim_{\epsilon_1 \rightarrow 0} \int_{M \backslash \{|h| < \epsilon_1\} \cup \bigcup_{i = 2}^r \{|x_i| < \epsilon_2\}} \omega\\
&= \lim_{\epsilon_2 \rightarrow 0} \lim_{\epsilon_1 \rightarrow 0} \int_{M \backslash \{|x_1| < \epsilon_1\} \cup \bigcup_{i = 2}^r \{|x_i| < \epsilon_2\}} \omega\\
&= \lim_{\epsilon \rightarrow 0} \int_{M \backslash U_{\epsilon}}\omega
\end{align*}
where the second equality is given by the independence of the Liouville volume of a $b$-form on the choice of defining function for the hypersurface. Because every change of local defining function can be written as a sequence of changes that affect only one hyperplane, this computation proves that the Liouville volume of a $c$-form is independent of the choices in its definition.
\end{proof}

Now we can define a map $\textrm{sm}: {^c}H^k(M) \rightarrow H^k(M)$ as follows. First, consider the case when $M$ is orientable. For $[\omega] \in {^c}H^k(M)$, let $\textrm{sm}([\omega]) \in H^k(M)$ be defined as the cohomology class that corresponds via Poincare duality to the element of $H^{n-k}_c(M)^*$ given by
\begin{equation}\label{eqn:smoothpart}
[\eta] \rightarrow \bint{M}{} \omega \wedge \eta\textrm{.}
\end{equation}

For the non-orientable case, we use the non-orientable Poincare duality (see Bott and Tu, Theorem 7.8), $H^k(M) \cong H^{n-k}_c(M, \textrm{or}_M)^*$, where $\textrm{or}_M$ is the orientation bundle of $M$. We also define an $E$-density on an $n$-dimensional $E$-manifold to be a section of $\Lambda^n ({^E}T^*M) \otimes \textrm{or}_M$. Just as a $c$-form restricted to the complement of $Z$ is a smooth form, a $c$-density restricted to the complement of $Z$ is a smooth density. The description of the Liouville volume of a $c$-form generalizes without any changes to the case of $c$-densities: because the proof of Proposition \ref{prop:liouvillevolumefinite} is local in nature, it applies to $c$-densities as well. So for a closed form $\omega$ on a non-orientable manifold, $\textrm{sm}([\omega]) \in H^k(M)$ is defined as the cohomology class that corresponds to the element of $H^{n-k}_c(M, \textrm{or}_M)^*$ given by equation \ref{eqn:smoothpart}

\subsection{The residue of a $c$-form}

We describe, for any $c$-manifold $(M, Z)$ and any $k \geq 0$, a \emph{residue} map
\[
\textrm{res}: {^c}\Omega^p(Z_k) \rightarrow {^c}\Omega^{p-1}(Z_{k-1})
\]
that generalizes the residue map ${^b}\Omega^p(M) \rightarrow \Omega^{p-1}(Z)$ for $b$-forms. Let $(x_1, \dots, x_n)$ be coordinates on $Z_k$ in which $\{x_1 = 0\}$ is the image of an open set $U \subseteq Z_{k+1}$ under the induced inclusion $Z_{k+1} \rightarrow Z_k$. In these coordinates, $\omega$ can be written as
\begin{equation}\label{eqn:decomp}
\omega = \alpha \wedge \frac{dx_1}{x_1} + \beta
\end{equation}

where $\alpha, \beta$ are $c$-forms of degree $p-1$ and $p$, respectively, whose coordinate expressions do not contain $\frac{dx_1}{x_1}$. On $U \subseteq Z_{k+1}$, the form $\textrm{res}(\omega)$ is defined as the pullback of $\alpha$ to $U$. Equivalently, let
\[
\mathbb{L} = (-1)^{p-1}x_1\frac{\partial}{\partial x_1}\textrm{.}
\]
Although $\mathbb{L}$ depends on the choice of coordinates, one can use the same proof in \cite{guimipi12} for the $b$-case to show that $\restr{\mathbb{L}}{i(Z_{k+1})}$ does not depend on the coordinates, and that the pullback $i^*(\iota_{\mathbb{L}}\omega)$ to $Z_k$ is a well-defined $c$-form on $Z_k$ which also equals $\textrm{res}(\omega)$. Also note that the residue map can be composed with itself; if $\omega \in {^c}\Omega^p(M)$, then $\textrm{res}^k(\omega) \in {^c}\Omega^{p-k}(Z_k)$.

\begin{example}\label{ex:mpm}

The form $\omega = \frac{dx}{x} \wedge \frac{dy}{y} \wedge \frac{dz}{z}$ is a $c$-form on $\mathbb{R}^3$, where $Z$ consists of the inclusion of the three coordinate hyperplanes. Its residue is written in Figure \ref{fig:mpmresidues}.

\begin{figure}[ht]
\centering
\begin{tikzpicture}[scale = .7]

\draw[thick, red, dashed] (0, 1.5) -- (0, -1.5);
\draw[thick, red, dashed] (-1.5, 0) -- (1.5, 0);
\draw[thick, red, dashed] (-0.5, -0.5) -- (0.5, 0.5);

\draw[thick, dashed, fill = gray, opacity = 0.5] (-0.5, -2) -- (0.5, -1) -- (0.5, 2) -- (-0.5, 1) -- cycle;
\draw[thick, dashed, fill = gray, opacity = 0.5] (-2, -0.5) -- (1, -0.5) -- (2, 0.5) -- (-1, 0.5) -- cycle;
\draw[thick, dashed, fill = gray, opacity = 0.5] (-1.5, -1.5) -- (-1.5, 1.5) -- (1.5, 1.5) -- (1.5, -1.5) -- cycle;


\draw[thick, red] (0, 1.5) -- (0, 0); \draw[thick, red] (0, -0.5) -- (0, -1.5);
\draw[thick, red] (-1.5, 0) -- (-0.5, 0); \draw[thick, red] (0, 0) -- (1.5, 0);
\draw[thick, red] (-0.5, -0.5) -- (0, 0);

\draw[thick] (0, -1.5) -- (-0.5, -2) -- (-0.5, 1) -- (0.5, 2) -- (0.5, 1.5);
\draw[thick] (-1.5, 0) -- (-2, -0.5) -- (1, -0.5) -- (2, 0.5) -- (1.5, 0.5);
\draw[thick] (-1.5, -1.5) -- (-1.5, 1.5) -- (1.5, 1.5) -- (1.5, -1.5) -- cycle;

\draw[thick, red, scale = 0.5, fill=red] (0, 0) circle(0.8mm);


\draw[thick, fill = gray, opacity = 0.5, shift={(-4.25,  1.35)}, scale = 0.5] (-0.5, -2) -- (0.5, -1) -- (0.5, 2) -- (-0.5, 1) -- cycle;
\draw[thick, fill = gray, opacity = 0.5, shift={(-4.25,  0)}, scale = 0.5] (-2, -0.5) -- (1, -0.5) -- (2, 0.5) -- (-1, 0.5) -- cycle;
\draw[thick, fill = gray, opacity = 0.5, shift={(-4.25, -1.35)}, scale = 0.5] (-1.5, -1.5) -- (-1.5, 1.5) -- (1.5, 1.5) -- (1.5, -1.5) -- cycle;

\draw[thick, red, shift={(-4.25,  1.35)}, scale = 0.5] (0, 1.5) -- (0, -1.5);
\draw[thick, red, shift={(-4.25,  1.35)}, scale = 0.5] (-0.5, -0.5) -- (0.5, 0.5);
\draw[thick, red, shift={(-4.25,  1.35)}, scale = 0.5, fill=red] (0, 0) circle(0.8mm);

\draw[thick, red, shift={(-4.25,  0)}, scale = 0.5] (-0.5, -0.5) -- (0.5, 0.5);
\draw[thick, red, shift={(-4.25,  0)}, scale = 0.5] (-1.5, 0) -- (1.5, 0);
\draw[thick, red, shift={(-4.25,  0)}, scale = 0.5, fill=red] (0, 0) circle(0.8mm);

\draw[thick, red, shift={(-4.25,  -1.35)}, scale = 0.5] (-1.5, 0) -- (1.5, 0);
\draw[thick, red, shift={(-4.25,  -1.35)}, scale = 0.5] (0, -1.5) -- (0, 1.5);
\draw[thick, red, shift={(-4.25,  -1.35)}, scale = 0.5, fill=red] (0, 0) circle(0.8mm);

\draw[thick, shift={(-4.25,  1.35)}, scale = 0.5] (-0.5, -2) -- (0.5, -1) -- (0.5, 2) -- (-0.5, 1) -- cycle;
\draw[thick, shift={(-4.25,  0)}, scale = 0.5] (-2, -0.5) -- (1, -0.5) -- (2, 0.5) -- (-1, 0.5) -- cycle;
\draw[thick, shift={(-4.25, -1.35)}, scale = 0.5] (-1.5, -1.5) -- (-1.5, 1.5) -- (1.5, 1.5) -- (1.5, -1.5) -- cycle;

\draw[shift = {(-4.25, -2.75)}, scale = 0.5] node{$Z = Z_1$};


\draw[thick, red, shift={(-7.5,  1.35)}, scale = 0.5, <->] (0, 1.5) -- (0, -1.5);
\draw[thick, red, shift={(-8.5,  1.35)}, scale = 0.5, <->] (-0.5, -0.5) -- (0.5, 0.5);
\draw[thick, red, shift={(-7.5,  1.35)}, scale = 0.5, fill=red] (0, 0) circle(0.8mm);
\draw[thick, red, shift={(-8.5,  1.35)}, scale = 0.5, fill=red] (0, 0) circle(0.8mm);

\draw[thick, red, shift={(-7.5,  0)}, scale = 0.5, <->] (-0.5, -0.5) -- (0.5, 0.5);
\draw[thick, red, shift={(-8.5,  0)}, scale = 0.5, <->] (-1.5, 0) -- (1.5, 0);
\draw[thick, red, shift={(-7.5,  0)}, scale = 0.5, fill=red] (0, 0) circle(0.8mm);
\draw[thick, red, shift={(-8.5,  0)}, scale = 0.5, fill=red] (0, 0) circle(0.8mm);

\draw[thick, red, shift={(-7.5,  -1.35)}, scale = 0.5, <->] (-1.5, 0) -- (1.5, 0);
\draw[thick, red, shift={(-8.5,  -1.35)}, scale = 0.5, <->] (0, -1.5) -- (0, 1.5);
\draw[thick, red, shift={(-7.5,  -1.35)}, scale = 0.5, fill=red] (0, 0) circle(0.8mm);
\draw[thick, red, shift={(-8.5,  -1.35)}, scale = 0.5, fill=red] (0, 0) circle(0.8mm);

\draw[shift = {(-8, -2.75)}, scale = 0.5] node{${Z_2}$};


\draw[thick, red, shift={(-11,  1.35)}, scale = 0.5, fill=red] (0, 0) circle(0.8mm);
\draw[thick, red, shift={(-12,  1.35)}, scale = 0.5, fill=red] (0, 0) circle(0.8mm);

\draw[thick, red, shift={(-11,  0)}, scale = 0.5, fill=red] (0, 0) circle(0.8mm);
\draw[thick, red, shift={(-12,  0)}, scale = 0.5, fill=red] (0, 0) circle(0.8mm);

\draw[thick, red, shift={(-11,  -1.35)}, scale = 0.5, fill=red] (0, 0) circle(0.8mm);
\draw[thick, red, shift={(-12,  -1.35)}, scale = 0.5, fill=red] (0, 0) circle(0.8mm);

\draw[shift = {(-11.5, -2.75)}, scale = 0.5] node{${Z_3}$};

\end{tikzpicture}

\caption{The components of ${Z_1}$ are $\{x=0\}, \{z=0\},$ and $\{y=0\}$, drawn from top to bottom. The diagram of ${Z_2}$ (respectively, ${Z_3}$) is arranged so that the canonical maps $Z_2 \rightarrow Z_1$ send components of $Z_2$ to components of $Z_1$ drawn to their right.}
\label{fig:mpm}
\end{figure}
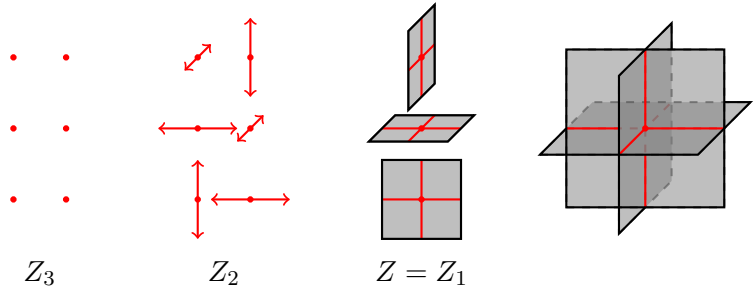

\begin{figure}[ht]
\centering
\begin{tikzpicture}[scale = .7]
\draw[thick, shift={(-0.75,  0)}, scale = 0.5] node{$\frac{dx}{x} \wedge \frac{dy}{y}\wedge \frac{dz}{z}$};

\draw[thick, shift={(-4.25,  1.35)}, scale = 0.5] node{$\frac{dy}{y}\wedge \frac{dz}{z}$};

\draw[shift={(-4.25,  0)}, scale = 0.5] node{$\frac{dx}{x} \wedge \frac{dy}{y}$};

\draw[thick, shift={(-4.25,  -1.35)}, scale = 0.5] node{$\frac{dz}{z} \wedge \frac{dx}{x}$};


\draw[thick, shift={(-7.25,  1.35)}, scale = 0.5] node{$-\frac{dz}{z}$};
\draw[thick, shift={(-8.75,  1.35)}, scale = 0.5] node{$\frac{dy}{y}$};

\draw[thick, shift={(-7.25,  0)}, scale = 0.5] node{$-\frac{dy}{y}$};
\draw[thick, shift={(-8.75,  0)}, scale = 0.5] node{$\frac{dx}{x}$};

\draw[thick, shift={(-7.25,  -1.35)}, scale = 0.5] node{$-\frac{dx}{x}$};
\draw[thick, shift={(-8.75,  -1.35)}, scale = 0.5] node{$\frac{dz}{z}$};

\draw[thick, shift={(-11,  1.35)}, scale = 0.5] node{$-1$};
\draw[thick, shift={(-12.5,  1.35)}, scale = 0.5] node{$1$};

\draw[thick, shift={(-11,  0)}, scale = 0.5] node{$-1$};
\draw[thick, shift={(-12.5,  0)}, scale = 0.5] node{$1$};

\draw[thick, shift={(-11,  -1.35)}, scale = 0.5] node{$-1$};
\draw[thick, shift={(-12.5,  -1.35)}, scale = 0.5] node{$1$};

\end{tikzpicture}

\caption{The residue of $\omega$, graphically arranged to match Figure \ref{fig:mpm}.}
\label{fig:mpmresidues}
\end{figure}

\end{example}

\begin{definition} Let $\omega \in {^c}\Omega^*({Z}_k)$. We say that $\omega$ is \textbf{compatible} if the following two conditions hold
\begin{itemize}
\item The action of any $\sigma \in S_k$ satisfies $\sigma^*(\omega') = \textrm{sign}(\sigma)\omega'$, where $\textrm{sign}(\sigma)$ is 1 if $\sigma$ corresponds to an even permutation, and $-1$ otherwise.
\item Either $\omega$ is smooth, or $\textrm{res}(\omega)$ is compatible.
\end{itemize}
\end{definition}
Because taking the repeated residue of a form will eventually give a smooth form, this recursive definition makes sense.

By the nature of the residue map, any $c$-form on a manifold $M$ will be compatible, but for a $c$-form on ${Z}_k$ to be compatible, it must ``agree'' at the points where $Z_k$ is identified in $M$, as illustrated in the example below.

\begin{example} Let $i: \mathbb{R} \sqcup \mathbb{R} \rightarrow \mathbb{R}^2$ be given by $x \mapsto (x, 0)$ on the first copy of $\mathbb{R}$ and $y \mapsto (0, y)$ on the second. Then an element of ${^c}\Omega^1({Z}_1)$ which is described by $f\frac{dx}{x}$ and $g\frac{dy}{y}$ is \emph{compatible} if $f(0) = -g(0)$.
\end{example}

Let $\widehat{\Omega}^*({Z}_i)$ (and ${^c}\widehat{\Omega}^*({Z_i})$) denote the space of compatible smooth forms (and $c$-forms). The differential preserves the property of compatibility; we write $\widehat{H}^*({Z_i})$ (and ${^c}\widehat{H}^*({Z_i})$) for the cohomology of this subcomplex. For the remainder of this section, we will use the notation $Z_0$ to denote $M$.

\begin{proposition} Let $(M, Z)$ be a $c$-manifold. Then for all $i \geq 0$,
\begin{equation}\label{eqn:sesomega}
0 \rightarrow \widehat{\Omega}^p(Z_i) \rightarrow {^c}\widehat{\Omega}^p(Z_i) \xrightarrow{\textrm{res}} {^c}\widehat{\Omega}^{p-1}(Z_{i+1}) \rightarrow 0
\end{equation}
is a short exact sequence of chain complexes. Moreover, any closed form in ${^c}\widehat{\Omega}^{p-1}(Z_{i+1})$ has a closed form in its preimage in ${^c}\widehat{\Omega}^{p}(Z_{i})$.
\end{proposition}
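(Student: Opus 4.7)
The plan is to check exactness at each of the three positions by a local construction, globalize via a partition of unity, and then upgrade to closed lifts using tubular-neighborhood data. Exactness at $\widehat{\Omega}^p(Z_i)$ (injectivity of the inclusion) is immediate. For exactness at ${^c}\widehat{\Omega}^p(Z_i)$, in local coordinates where each branch of $Z_{i+1}\subseteq Z_i$ appears as $\{x_k=0\}$, the decomposition~\eqref{eqn:decomp} (iterated over all branches) shows that $\textrm{res}(\omega)$ vanishes on every branch exactly when no $\tfrac{dx_k}{x_k}$-factor appears in the coordinate expression of $\omega$, i.e., when $\omega$ is smooth; the compatibility bookkeeping then identifies this kernel with $\widehat{\Omega}^p(Z_i)$.

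For surjectivity at ${^c}\widehat{\Omega}^{p-1}(Z_{i+1})$ and the closed-lift claim simultaneously, I would proceed componentwise. For each connected component $W$ of the immersed submanifold $Z_{i+1} \hookrightarrow Z_i$, fix a tubular neighborhood with local defining function $x_W$ and tubular projection $\pi_W$, and choose a bump function $\chi_W(x_W)$ equal to $1$ on a smaller neighborhood of $W$ and compactly supported in the normal direction. Define
\[
\omega_W := \chi_W(x_W) \, \pi_W^*(\alpha|_W) \wedge \frac{dx_W}{x_W}\textrm{,}
\]
extended by zero outside the tubular neighborhood, set $\omega := \sum_W \omega_W$, and symmetrize under the $S_i$-action to obtain a \emph{compatible} lift. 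The residue computation immediately gives $\textrm{res}(\omega) = \alpha$. When $\alpha$ is closed, the crucial identity $dx_W \wedge \tfrac{dx_W}{x_W} = 0$ (since $dx_W = x_W \cdot \tfrac{dx_W}{x_W}$ as $c$-forms) combined with $d(\pi_W^*(\alpha|_W)) = \pi_W^*(d\alpha|_W) = 0$ shows that each $\omega_W$ is already closed, and hence $\omega$ is closed. This sidesteps the usual partition-of-unity obstruction, in which cross terms $d\rho_\lambda \wedge \omega_\lambda$ would otherwise destroy closedness.

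The hardest step will be handling overlaps between tubular neighborhoods of distinct components of $Z_{i+1}$, in particular when two components are identified under the self-transverse immersion $Z_{i+1}\to M$. In such overlap regions, different local lifts $\omega_W$ and $\omega_{W'}$ must match up into a well-defined compatible $c$-form on $Z_i$. The compatibility condition on $\alpha$ is tailored precisely to guarantee this matching (the alternating-sign requirement along different branches is exactly what is needed to make the sum consistent), and the final $S_i$-symmetrization, which commutes with both $d$ and $\textrm{res}$, preserves closedness while producing compatibility. A related subtlety is possible non-orientability of the normal bundle of $W$, which if necessary can be absorbed by passing to the $c$-density framework introduced in the preceding subsection.
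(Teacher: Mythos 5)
Your proposal is correct and follows essentially the same route as the paper: identify the kernel via the local decomposition \eqref{eqn:decomp}, then lift $\alpha$ by wedging $\pi_W^*\alpha$ with a fiberwise compactly supported version of $dx_W/x_W$ (your $\chi_W(x_W)\,dx_W/x_W$ plays the role of the paper's $d(\rho(x_1))/\rho(x_1)$), sum over branches, and symmetrize. The one substantive divergence is your treatment of non-coorientable $W$: the correct fix is not to pass to $c$-densities but to make the normal $1$-form invariant under $x_W \mapsto -x_W$ (the paper takes $\rho$ odd; in your version it suffices to take $\chi_W$ even), and you should also record the one-line verification that $\textrm{res}$ commutes with $d$, which is needed for this to be a sequence of chain complexes.
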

\begin{proof}
In verifying that Equation \ref{eqn:sesomega} is a short exact sequence of chain maps, the nontrivial parts are to verify that $\textrm{res}$ commutes with the differentials, and is surjective. First, let $\omega \in {^c}\widehat{\Omega}^p(Z_i)$. Around a point of $i(Z_{i+1}) \subseteq Z_i$, we can pick local coordinates and use the decomposition of $\omega$ described in Equation \ref{eqn:decomp},
\[
\omega = \alpha \wedge \frac{dx_1}{x_1} + \beta.
\]
Then $d\omega = d\alpha \wedge \frac{dx_1}{x_1} + d\beta$, whose residue on $Z_{i+1}$ is given by the pullback of $d\alpha$. The fact that $\textrm{res}$ commutes with $d$ then follows from the fact that differentials commute with pullbacks.\\

To verify that $\textrm{res}$ is surjective, let $\alpha \in {^c}\hat{\Omega}^{p-1}(Z_{i+1})$. We will construct an $\omega \in {^c}\Omega^p(Z_i)$ whose residue is $\alpha$. In the case that $\alpha$ is closed, this construction will produce a closed $\omega$, proving the final part of the theorem.\\

The $S_{i+1}$ action on $Z_{i+1}$ acts on the components of $Z_{i+1}$. Let $W$ be a union of components of $Z_{i+1}$, one from each orbit of this action. Let $\pi: N_{W} \rightarrow W$ be the tubular neighborhood of $W$ in $Z_i$. If $W \subseteq Z_i$ is coorientable, let $x_1 \in C^{\infty}(N_W)$ be a defining function for the zero section of $N_W$, and let $\tilde{x_i} = \rho \circ x$, where $\rho: \mathbb{R} \rightarrow \mathbb{R}$ be an odd function which is locally constant outside $[-1, 1]$ and equal to $\rho(x) = x$ on $[\frac{1}{2}, \frac{1}{2}]$.

\begin{figure}[ht]
\centering
\begin{tikzpicture}

\draw[->] (-1.5, 0) -- (1.5, 0) node[below right] {\tiny $x$};
\draw[->] (0, -.5) -- (0, 0.5) node[above right] {\tiny $\rho(x)$};

\draw (-1.5, -0.5) -- (-0.75, -0.5) arc (-90:-40:0.75cm) -- (0, 0);
\draw (1.5, 0.5) -- (0.75, 0.5) arc (90:140:0.75cm) -- (0, 0);

\draw (0.35, 0.1) -- (0.35, -0.1) node[below] {\tiny $\frac{1}{2}$};
\draw (0.7, 0.1) -- (0.7, -0.1) node[below] {\tiny $1$};

\end{tikzpicture}
\end{figure}
Then, let $\eta = \pi^*(\alpha_k) \wedge \frac{d\tilde{x_1}}{\tilde{x_1}}$. In the case when $W \subseteq Z_i$ is not coorientable, then a defining function $x_1$ can only be defined up to sign. But because $\rho$ is an odd function, $d\tilde{x_1}/\tilde{x_1}$ does not change if $x_1$ is replaced by $-x_1$, so the construction of $\eta$ is well-defined even when $W \subseteq Z_i$ is not coorientable.\\

Then $\eta$ is a $c$-form on $N_M$ whose restriction to each fiber is compactly supported, $\textrm{res}(\eta) = \alpha$, and $\eta$ is closed if $\alpha$ is. Next, let $\phi: N_W \rightarrow M$ be a tubular neighborhood of $W$. If $\phi$ is an embedding, it is a diffeomorphism onto its image, and we could construct a form $\omega'$ in a neighborhood of $W \subseteq Z_{i}$ simply by identifying $N_M$ with its image and taking $\omega'$ to be $\eta$. If $\phi$ is not an embedding (for example, if $W = S^1$ is immersed as a figure-eight), then for every open set $U \subseteq Z_i$ for which $\restr{\phi}{\phi^{-1}(U)}$ is a $k$-to-one cover, then define $\restr{\omega'}{U}$ to be the \emph{sum} of the $k$ different forms obtained by identifying $U$ with each of the $k$ different preimages, and taking $\eta$ in each preimage.

Finally, let $\omega$ equal $\omega'$ summed over the action of $S_k$. By construction, $\omega$ is a compatible form with $\textrm{res}(\omega) = \alpha$. Because $\eta$ is closed whenever $\alpha$ is closed, the same is true for $\omega'$ and for $\omega$.

\end{proof}

\begin{theorem}\label{thm:c-cohomology}
Let $(M, Z)$ be a $c$-manifold. Then

\begin{align*}
{^c}H^p(M) &\cong  H^p(M) \oplus \widehat{H}^{p-1}(Z_1) \oplus \widehat{H}^{p-2}(Z_2) \oplus \cdots \oplus \widehat{H}^{0}(Z_p)\\
[\omega] &\mapsto (\textrm{sm}(\omega), \textrm{sm}(\textrm{res}(\omega)), \textrm{sm}(\textrm{res}^2(\omega)), \dots, \textrm{sm}(\textrm{res}^p(\omega)))
\end{align*}
\end{theorem}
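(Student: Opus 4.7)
The plan is to prove, by induction on $p$, the more general statement
\[
{^c}\widehat{H}^p(Z_i) \cong \widehat{H}^p(Z_i) \oplus \widehat{H}^{p-1}(Z_{i+1}) \oplus \cdots \oplus \widehat{H}^0(Z_{i+p}),
\]
valid for every $i \geq 0$, from which the theorem follows by setting $i = 0$ and noting that every $c$-form on $M = Z_0$ is automatically compatible, so ${^c}H^p(M) = {^c}\widehat{H}^p(Z_0)$ and $\widehat{H}^p(Z_0) = H^p(M)$.

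First, I would feed the short exact sequence of chain complexes from the previous proposition into the snake lemma, obtaining a long exact sequence relating $\widehat{H}^{\bullet}(Z_i)$, ${^c}\widehat{H}^{\bullet}(Z_i)$, and ${^c}\widehat{H}^{\bullet-1}(Z_{i+1})$. The strengthened surjectivity clause---every closed form in ${^c}\widehat{\Omega}^{p-1}(Z_{i+1})$ admits a \emph{closed} preimage under $\textrm{res}$---forces the connecting homomorphism to vanish identically, so the long exact sequence collapses into short exact sequences
\[
0 \to \widehat{H}^p(Z_i) \to {^c}\widehat{H}^p(Z_i) \xrightarrow{\textrm{res}} {^c}\widehat{H}^{p-1}(Z_{i+1}) \to 0.
\]

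Next, I would exhibit $\textrm{sm}$ as a canonical retraction of the inclusion, thereby splitting this sequence. The key observation is that the Liouville integral of a genuinely smooth form coincides with the ordinary integral: in the limit $\lim_{\epsilon \to 0}\int_{M \setminus U_\epsilon}$ the excised tubular neighborhood contributes nothing since the integrand extends smoothly across $Z$. Consequently, for smooth $\omega$ the functional $[\eta] \mapsto \int_M \omega \wedge \eta$ used to Poincar\'e-dually characterize $\textrm{sm}([\omega])$ agrees with the standard one, so $\textrm{sm}$ restricted to $\widehat{H}^p(Z_i) \hookrightarrow {^c}\widehat{H}^p(Z_i)$ is the identity. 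This gives
\[
{^c}\widehat{H}^p(Z_i) \cong \widehat{H}^p(Z_i) \oplus {^c}\widehat{H}^{p-1}(Z_{i+1}), \qquad [\omega] \longmapsto (\textrm{sm}(\omega), \textrm{res}(\omega)).
\]

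The induction itself is then routine. The base case $p = 0$ is immediate: zero-forms are just functions, admit no singular part, so ${^c}\widehat{H}^0(Z_i) = \widehat{H}^0(Z_i)$. In the inductive step, substitute the hypothesis for ${^c}\widehat{H}^{p-1}(Z_{i+1})$ into the above splitting and iterate, producing precisely the map $[\omega] \mapsto (\textrm{sm}(\omega), \textrm{sm}(\textrm{res}(\omega)), \dots, \textrm{sm}(\textrm{res}^p(\omega)))$ stated in the theorem. The principal obstacle I anticipate is handling the non-orientable case carefully, where $\textrm{sm}$ is defined via Poincar\'e duality with coefficients in $\textrm{or}_M$ and one must run the retraction argument using $c$-densities paired against compactly supported classes in $H^{n-p}_c(M, \textrm{or}_M)$; fortunately the proof of Proposition \ref{prop:liouvillevolumefinite} is local in nature and applies verbatim to densities, so the smooth-equals-ordinary identification of the Liouville integral still holds and the argument goes through unchanged.
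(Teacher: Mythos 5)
Your proposal is correct and follows essentially the same route as the paper: the short exact sequence of compatible complexes yields short exact sequences in cohomology (the closed-preimage clause killing the connecting homomorphism), $\textrm{sm}$ splits them as a retraction of the inclusion of smooth forms, and iterating over the pairs $(p-j, Z_{i+j})$ gives the stated decomposition. You merely organize the induction by degree rather than by multiplicity index and spell out details the paper leaves implicit (the vanishing of the connecting map, why $\textrm{sm}$ restricts to the identity on smooth classes, and the non-orientable case via densities).
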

\begin{proof}
For each $i$, the short exact sequence in Equation \ref{eqn:sesomega} induces short exact sequences in cohomology
\[
0 \rightarrow \widehat{H}^p(Z_i) \rightarrow {^c}\widehat{H}^p(Z_i) \xrightarrow{-\textrm{res}} {^c}\widehat{H}^{p-1}(Z_{i+1}) \rightarrow 0.
\]
which is split by the map
\[
\textrm{sm}: {^c}\widehat{H}^p(Z_i) \rightarrow \widehat{H}^p(Z_i).
\]
Applying induction on $i$, and the fact that ${^c}\widehat{\Omega}^p(Z_0) = {^c}\widehat{\Omega}^p(M) = {^c}\Omega^p(M)$ yields the result.
\end{proof}

Notice that each $\widehat{H}^p(Z_i)$ is isomorphic to $H^p(\overline{Z}_i)$, but that this isomorphism is only canonical up to a choice of sign.

Theorem \ref{thm:c-cohomology} implies that unlike usual symplectic forms (for which there is a Darboux theorem), $c$-symplectic forms have local invariants.

\begin{example}\label{ex:nodarboux}
Consider the family $k\frac{dx}{x}\wedge \frac{dy}{y}$ of $c$-symplectic forms on $\mathbb{R}^2$, parametrized by $k > 0$. The terms of the residue decomposition of $[k\frac{dx}{x}\wedge \frac{dy}{y}]$ from Theorem \ref{thm:c-cohomology} all vanish except for the part on $Z_2$. On $Z_2$, the residue is given by the locally constant function equal to $k$ on one of the components of $Z_2$, and $-k$ on the other component. Therefore, each $c$-symplectic form in this family represents a different $c$-cohomology class, and hence the $c$-symplectic manifolds in this family are pairwise non-symplectomorphic.
\end{example}

Example \ref{ex:nodarboux} shows that there may be cohomological obstructions to constructing local symplectomorphisms between $c$-symplectic forms; the following theorem shows that these are the \emph{only} such obstructions.

\begin{theorem}[$c$-Darboux theorem]
Let $\omega_0$ and $\omega_1$ be two $c$-symplectic forms for which the three elements under the isomorphism in theorem \ref{thm:c-cohomology} above coincide, then $\omega_0$ and $\omega_1$ are symplectomorphic.
\end{theorem}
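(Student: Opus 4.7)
The plan is to imitate the proof of Theorem \ref{thm:love}: build the affine path $\omega_t := (1-t)\omega_0 + t\omega_1$, verify that it is a path of $c$-symplectic forms representing a constant $c$-cohomology class, and then invoke Theorem \ref{thm:moser} to obtain a flow $\rho_t \in \mathrm{Aut}(M,E)$ with $\rho_t^*\omega_t=\omega_0$; the map $\phi:=\rho_1$ is then the desired symplectomorphism.

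The cohomological hypothesis is the easy input. Because the isomorphism of Theorem \ref{thm:c-cohomology} is $\mathbb{R}$-linear, the assumption that $\omega_0$ and $\omega_1$ have the same image $(\mathrm{sm}(\omega), \mathrm{sm}(\mathrm{res}(\omega)), \mathrm{sm}(\mathrm{res}^2(\omega)))$ forces $[\omega_0]=[\omega_1]$ in ${^c}H^2(M)$, so $[\omega_t]$ is independent of $t$ and in particular $\omega_1-\omega_0=d\mu$ for some $\mu\in{^c}\Omega^1(M)$ — exactly the primitive needed to run Moser.

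The substantive obstacle is showing that $\omega_t$ remains nondegenerate for every $t\in[0,1]$. Since ${^c}TM$ has rank $2$, any $c$-$2$-form is locally of the shape $f\,\tfrac{dx_1}{x_1}\wedge\tfrac{dx_2}{x_2}$ (with the appropriate number of singular factors in the chart), and nondegeneracy is equivalent to $f$ being nowhere zero. I would argue that the matching of the three decomposition components pins down the sign of $f$ on every stratum: the $\widehat{H}^{0}(Z_2)$-component is a locally constant function on $\overline{Z_2}$ that records the value of $f$ at the self-intersection points (up to the sign conventions of the residue), the $\widehat{H}^{1}(Z_1)$-component controls the sign of $f$ along each branch of $Z_1$, and the $H^2(M)$-component (via the Liouville integral) controls the sign on $M\setminus Z$. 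Matching of these three pieces forces $\omega_0$ and $\omega_1$ to induce the same orientation on ${^c}TM$, and so the convex combination stays nondegenerate exactly as in the proof of Theorem \ref{thm:love}.

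With both hypotheses of Theorem \ref{thm:moser} in hand, that theorem produces a time-dependent $c$-vector field $X_t$ with $\iota_{X_t}\omega_t=-\mu$ whose flow $\rho_t$ satisfies $\rho_t^*\omega_t=\omega_0$; the diffeomorphism $\rho_1\in\mathrm{Aut}(M,E)$ is the required $c$-symplectomorphism. The main subtlety, as flagged above, is the nondegeneracy along the path: in contrast to the $b$-case, where a single residue controls everything, here one needs the top residue on $Z_p$ together with the intermediate residues to coherently fix the orientation of $\omega_t$, which is precisely what the hypothesis on the three decomposition components provides.
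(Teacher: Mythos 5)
Your overall strategy---take the affine path $\omega_t=(1-t)\omega_0+t\omega_1$, note that the matching of the three components forces $[\omega_0]=[\omega_1]$ in ${^c}H^2(M)$, and then run Theorem \ref{thm:moser}---is exactly the paper's proof, which is essentially the one-line remark that the three classes determine the class in ${^c}H^2(M)$ followed by an appeal to Moser. So on the cohomological side you and the paper agree, and your identification of the primitive $\mu$ with $\iota_{X_t}\omega_t=-\mu$ is the intended mechanism.

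The gap is in the step you yourself flag as the substantive one: nondegeneracy of $\omega_t$ for all $t$. The argument you offer---that the $H^2(M)$, $\widehat{H}^{1}(Z_1)$ and $\widehat{H}^{0}(Z_2)$ components ``pin down the sign of $f$ on every stratum''---does not hold as stated. A cohomology class never determines the pointwise sign or nondegeneracy of a representative (two cohomologous area forms on an open set can differ by an exact form that changes the sign of the coefficient somewhere), and your reduction to the local shape $f\,\tfrac{dx_1}{x_1}\wedge\tfrac{dx_2}{x_2}$ silently assumes that ${^c}TM$ has rank $2$, whereas the theorem as stated carries no dimension restriction; in rank $\geq 4$ the set of nondegenerate $2$-forms inducing a fixed orientation is not convex, so even agreement of orientations would not rescue the linear path. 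What \emph{is} true, and what makes the statement defensible as a \emph{Darboux} (i.e.\ local) theorem in the surface case of Example \ref{ex:nodarboux}, is that near a point of $Z_2$ the coefficient $f_i$ is nowhere zero, hence of constant sign on a small ball, and the matching of the $\widehat{H}^0(Z_2)$ component forces $f_0$ and $f_1$ to agree at the double point, hence to have the same sign nearby, so the convex combination is nondegenerate \emph{locally}. You should either restrict to that local/rank-$2$ setting or add the hypothesis (as in Theorem \ref{thm:moser} itself) that $\omega_t$ is $c$-symplectic for all $t$. To be fair, the paper's own proof elides this point entirely, so your proposal is, if anything, more candid about where the real work lies---but the justification you supply for it is not correct.
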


\begin{proof}From theorem  \ref{thm:c-cohomology}, the three classes determine the cohomology class in ${^c}H^p(M)$ thus from Moser's theorem \cite{moser} we obtain the desired result.
\end{proof}

\begin{remark} As a consequence of the theorem above, there is not a unique model for $c$-symplectic structures due to the fact that the residue  in theorem \ref{thm:c-cohomology} is not constant. 

\end{remark}
\section{$E$-cohomology and Poisson cohomology}

In this section we focus our attention on comparing $E$-cohomology with Poisson cohomology associated to Poisson structures whose associated algebroid is $E$. In \cite{guimipi12} it was proven that for $b$-symplectic manifolds, $b$-cohomology is isomorphic to Poisson cohomology.

In this section we give an example proving that this is not true in general for $E$-symplectic manifolds. In this section, $E$ refers to the
 locally free involutive submodule of $Vect(\mathbb{R}^2)$ generated by
\[
v_1 = x\frac{\partial}{\partial x} + y\frac{\partial}{\partial y} \hspace{1cm} \textrm{and} \hspace{1cm} v_2 = -y\frac{\partial}{\partial x} + x\frac{\partial}{\partial y}
\]
Then ${^E}T^*M$ is generated by the sections
\[
v_1^* = \frac{xdx + ydy}{x^2 + y^2} \hspace{1cm} \textrm{and} \hspace{1cm} v_2^* = \frac{-ydx + xdy}{x^2 + y^2}
\]

We can naturally associate a Poisson structure to $E$, $\Pi_E=v_1\wedge v_2= (x^2+y^2) \frac{\partial}{\partial x}\wedge \frac{\partial}{\partial y}$.

We start by computing $E$-cohomology.

\subsection{A computation of $E$-cohomology }
Recall that a function $f \in C^{\infty}(\mathbb{R}^n)$ is called \textbf{analytically flat} at a point $p$ if $f$ and all its derivatives vanish at $p$.

\begin{lemma} Let $f(x, y) \in C^{\infty}(\mathbb{R}^2)$ be analytically flat at $0$ and $g(x, y) \in C^{\infty}(\mathbb{R}^2)$ be a nonnegative function. For any positive even numbers $p, q$, the function
\[
h(x, y) = \frac{f(x, y)}{x^p + y^q + g(x, y)}
\]
\noindent is smooth and  analytically flat.
\end{lemma}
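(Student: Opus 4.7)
The plan is to prove that $h$ extends from $\RR^2 \setminus \{0\}$ to a $C^\infty$ function on $\RR^2$ whose Taylor series at the origin is identically zero. Away from the origin there is nothing to do: since $p,q$ are even and $g \geq 0$, the denominator $D := x^p + y^q + g$ is a sum of nonnegative terms, and $D(x_0,y_0)=0$ forces $x_0^p = y_0^q = 0$, i.e.\ $(x_0,y_0)=(0,0)$. So $h$ is smooth on $\RR^2 \setminus \{0\}$, and the only issue is the origin.

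First I would establish a lower bound for $D$. Setting $r := \sqrt{x^2+y^2}$ and splitting into the cases $|x| \geq |y|$ and $|y| \geq |x|$, the evenness of $p,q$ gives $D \geq x^p + y^q \geq c\, r^{\max(p,q)}$ for all $r \leq 1$ and some constant $c>0$. Next I would pair this with the defining bound for analytic flatness of $f$, namely that for every $N \in \mathbb{N}$ and every multi-index $\gamma$ there is a constant $C_{N,\gamma}$ with $|\partial^{\gamma} f(x,y)| \leq C_{N,\gamma}\, r^N$ in a neighborhood of the origin.

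The key step is to show by induction on $|\alpha|$ that on $\RR^2 \setminus \{0\}$ every partial derivative $\partial^{\alpha} h$ can be written as a finite sum
\[
\partial^{\alpha} h \; = \; \sum_{\beta} \frac{(\partial^{\gamma_\beta} f)\cdot P_\beta}{D^{k_\beta}},
\]
where each $P_\beta$ is a smooth function (a polynomial in $x,y$ and derivatives of $g$) and $k_\beta \geq 1$. This follows from the quotient rule, since differentiating a term of this form produces more terms of the same form, with $\partial^{\gamma_\beta} f$ replaced by one of its further derivatives (still flat). Combining this structural formula with the two bounds above gives
\[
|\partial^{\alpha} h(x,y)| \; \leq \; \sum_{\beta} C'_{\beta,N}\, r^{N - k_\beta \max(p,q)}
\]
for every $N$, and choosing $N$ large enough shows that $\partial^{\alpha} h \to 0$ as $r \to 0$.

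The last step is to invoke the standard extension principle: if $h$ is smooth on $\RR^2 \setminus \{0\}$, extends continuously across the origin (by $h(0,0):=0$), and each partial derivative $\partial^{\alpha} h$ on the punctured neighborhood extends continuously to $0$ at the origin, then $h$ is $C^\infty$ on $\RR^2$ with those continuous extensions as its partials; in particular, $\partial^{\alpha}h(0,0)=0$ for every $\alpha$, so $h$ is analytically flat at the origin. The main obstacle is bookkeeping in the inductive step: one must verify that differentiating a term $(\partial^{\gamma} f)P/D^k$ never produces a factor where the flat function $\partial^{\gamma} f$ disappears, and that the exponent $k_\beta$ grows only in a controlled way, so the bound in step three still works. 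Both follow directly from the quotient rule, since in $-\,(\partial^{\gamma}f)(\partial_j D) P / D^{k+1} + (\partial^{\gamma+e_j}f) P / D^k + (\partial^{\gamma}f)(\partial_j P)/D^k$ every term retains a derivative of $f$ in the numerator.
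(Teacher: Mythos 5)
Your proof is correct, and it reaches the conclusion by a genuinely different estimation technique than the paper's. The paper never bounds the denominator from below: instead it Taylor-expands the numerator as $f = \sum_{i+j=2p} f_{ij}x^iy^j$ with the $f_{ij}$ flat (taking $p \ge q$ without loss of generality), splits the sum according to whether $i \ge p$, and absorbs either $x^p$ or $y^q$ into the denominator using the elementary bounds $x^p/(x^p+y^q+g) \le 1$ and $y^q/(x^p+y^q+g) \le 1$; smoothness then follows by an induction in which one observes that a first partial derivative of $h$ is again a flat function divided by $x^{2p}+y^{2q}+\tilde g$ with $\tilde g \ge 0$, i.e.\ a denominator of the same admissible shape with doubled exponents. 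You instead prove the quantitative lower bound $D \ge c\,r^{\max(p,q)}$ near the origin, pair it with the standard flatness estimate $|\partial^\gamma f| \le C_{N,\gamma}r^N$, and run a structural induction writing $\partial^\alpha h$ as a finite sum of terms $(\partial^\gamma f)P/D^k$; both proofs then finish with the same continuous-extension-of-derivatives principle (which the paper leaves implicit). Your bookkeeping is arguably the more robust of the two --- the exponent $k$ in the denominator is tracked explicitly and a sufficiently large choice of $N$ kills it --- whereas the paper's version is shorter but requires verifying at each stage that the squared denominator is again of the form ``even powers plus a nonnegative function.'' Both arguments are complete modulo the same standard facts (Taylor's theorem for flat functions and the $C^1$-extension lemma across a point), so I have no corrections.
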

\begin{proof} Because $x^p + y^q + g(x, y)$ is nonzero away from $0$, it suffices to show that $h$ is defined and smooth at $0$. To show that it is defined, assume without loss of generality that $p \geq q$. By Taylor's theorem, we can write
\[
f = \sum_{\substack{i, j \geq 0 \\ i+j = 2p}} f_{ij}x^iy^j
\]
\noindent for flat functions $f_{ij}$. Notice that
\begin{align*}
|h| &= \left| \sum_{i + j = 2p} \frac{f_{ij}x^iy^j}{x^p + y^q + g} \right| \\
&= \left| \sum_{\substack{i + j = 2p\\ i \geq p}} f_{ij}x^{i-p}y^j \frac{x^p}{x^p + y^q + g} \right|  + \left| \sum_{\substack{i + j = 2p\\ i < p}} f_{ij}x^{i}y^{j-q} \frac{y^q}{x^p + y^q + g} \right| \\
&\leq \left| \sum_{\substack{i + j = 2p\\ i \geq p}} f_{ij}x^{i-p}y^j \right|  + \left| \sum_{\substack{i + j = 2p\\ i < p}} f_{ij}x^{i}y^{j-q} \right| \\
\end{align*}
By the sandwich theorem, the limit exists. To prove that $h$ is smooth, observe that the first partial derivatives of $h$ are again a quotient of a flat function by a function of the form $(x^{2p} + y^{2q} + \tilde{g})$ and apply induction.
\end{proof}

\begin{theorem} Let $E \subseteq Vect(M)$, $v_1, v_2$ be as described above.
\[
{^E}H^k(M) = \left\{ \begin{array}{l l} \mathbb{R} & k = 0\\ \mathbb{R}^2 & k = 1\\ \mathbb{R} & k = 2\\ 0 & \textrm{otherwise} \end{array} \right.
\]
\end{theorem}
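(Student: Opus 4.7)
The plan is to exploit the rotational symmetry of the pair $(v_1,v_2)$ and reduce the computation to one on rotation-invariant $E$-forms, then handle the issues at the origin by Hadamard-style division. First, a direct calculation gives $[v_1,v_2]=0$, hence $dv_1^*=dv_2^*=0$, and the $E$-de Rham complex becomes explicitly
\[
C^\infty(\mathbb{R}^2) \xrightarrow{d_0} C^\infty(\mathbb{R}^2)\,v_1^* \oplus C^\infty(\mathbb{R}^2)\,v_2^* \xrightarrow{d_1} C^\infty(\mathbb{R}^2)\,v_1^* \wedge v_2^*,
\]
with $d_0 f = (v_1 f)\,v_1^* + (v_2 f)\,v_2^*$ and $d_1(\alpha v_1^* + \beta v_2^*) = (v_1\beta - v_2\alpha)\,v_1^* \wedge v_2^*$. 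All $E$-cohomology in degrees $>2$ vanishes automatically because ${^E}T^*M$ has rank $2$.

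Since $v_2 \in E$, its flow $\rho_t$ (rotation by angle $t$) lies in $\mathrm{Aut}(M,E)$, so the Cartan calculus valid for $E$-forms (recorded after Definition \ref{def:ops}) gives $\tfrac{d}{dt}\rho_t^*\omega = d(\rho_t^*\iota_{v_2}\omega)$ for any closed $E$-form $\omega$. Integrating in $t$ and averaging over the circle shows that $\omega$ is cohomologous to its rotational average $\bar\omega := \tfrac{1}{2\pi}\int_0^{2\pi}\rho_t^*\omega\,dt$. A second averaging applied to any primitive of a rotation-invariant exact form produces a rotation-invariant primitive, so $H^*(E)$ equals the cohomology of the rotation-invariant subcomplex.

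On this subcomplex, every invariant smooth function has the form $F(s)$ with $s = x^2 + y^2$ and $F \in C^\infty([0,\infty))$. Using $v_1 F(s) = 2sF'(s)$ and $v_2 F(s) = 0$, the differentials simplify to $d_0 F = 2sF'(s)\,v_1^*$ and $d_1(A(s)v_1^* + B(s)v_2^*) = 2sB'(s)\,v_1^* \wedge v_2^*$. The kernel of $d_0$ is the constants, yielding $H^0 = \mathbb{R}$. For $H^1$, the kernel of $d_1$ consists of pairs $(A,B)$ with $B$ constant; Hadamard's lemma writes $A(s) - A(0) = s\,\tilde A(s)$, so setting $F'(s) = \tilde A(s)/2$ one subtracts $d_0 F$ to reduce $(A,B)$ to $(A(0),B)$, giving $H^1 = \mathbb{R}^2$ with representatives $[v_1^*]$ and $[v_2^*]$. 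The same division step identifies the cokernel of $d_1$ with the value of $C(s)$ at $s=0$, giving $H^2 = \mathbb{R}$ with representative $[v_1^* \wedge v_2^*]$.

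The only genuine obstacle throughout is maintaining global smoothness at the origin, where $v_1^* = (x\,dx + y\,dy)/s$ and $v_2^* = (-y\,dx + x\,dy)/s$ are singular as ordinary differential forms; this is exactly what Hadamard's lemma, refined for flat remainders by the lemma stated just above the theorem, is designed to control. Non-triviality of the classes is confirmed by the same averaging device: if $a v_1^* + b v_2^* = df$ with $f$ smooth, we may replace $f$ by $\bar f = F(s)$, and then $2sF'(s) = a$ forces $a = 0$ at $s=0$ and hence $F$ constant, so $b=0$ as well; the analogous check for $[v_1^* \wedge v_2^*]$ is identical.
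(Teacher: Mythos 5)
Your proof is correct, but it follows a genuinely different route from the paper's. The paper works non-equivariantly: it evaluates the coefficient functions at the origin to produce the maps ${^E}H^1 \to \mathbb{R}^2$ and ${^E}H^2 \to \mathbb{R}$, then shows injectivity by building a primitive from the analytic jet of the coefficients (summing $(i+j)^{-1}g_{ij}x^iy^j$ so that $v_1(f)-g$ is flat) and absorbing the flat remainder via the division lemma $f/(x^p+y^q+g)$ stated just before the theorem, which reduces the problem to the smooth Poincar\'e lemma. You instead exploit the circle action generated by $v_2 \in E$: averaging over its periodic flow (justified by the Cartan/homotopy formulas the paper records for $E$-forms) reduces the computation to the rotation-invariant subcomplex, which collapses to an explicit one-variable complex in $s=x^2+y^2$ that Hadamard's lemma dispatches. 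What your approach buys is a cleaner calculation with explicit generators $1$, $v_1^*$, $v_2^*$, $v_1^*\wedge v_2^*$, and no contact with flat functions at all; what it costs is reliance on two inputs special to this example --- the existence of a \emph{periodic} flow inside $E$, and the Whitney--Glaeser--Schwarz theorem that every rotation-invariant smooth function on $\mathbb{R}^2$ is a smooth function of $x^2+y^2$. The latter is the one substantive step you use without naming or proving it; it is classical and citable, but it plays exactly the role in your argument that the flat-division lemma plays in the paper's, so you should state it explicitly rather than leave it implicit in the phrase ``every invariant smooth function has the form $F(s)$.'' The paper's jet-plus-flat-remainder method, by contrast, would survive perturbations of $E$ that destroy the symmetry.
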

\begin{proof}
The degree 0 cohomology is generated by the classes of the constant functions, and hence is isomorphic to $\mathbb{R}$. Let $K$ be the closed $E$-forms of degree 1, and consider the map
\[
K \rightarrow \mathbb{R}^2 \hspace{2cm}
gv_1^* + hv_2^* \mapsto (g(0), h(0)).
\]
Exact 1-forms $v_2(f)v_1^* + v_1(f)v_2^*$ are in the kernel of this map, so this induces ${^E}H^1(\mathbb{R}^2) \rightarrow \mathbb{R}^2$. The map is surjective, since $c_1v_1^* + c_2v_2^* \mapsto (c_1, c_2)$ for any $c_1, c_2$. To show that it is surjective, let $\eta = gv_1^* + hv_2^*$ be in the kernel of this map; we will show that $\eta$ is exact. Let $\sum_{i, j \geq 0} g_{ij}x^iy^j$ be the analytic jet of $g$. Then
\[
f = \sum_{\substack{i, j \geq 0\\ (i, j) \neq (0, 0)}} (i+j)^{-1}x^iy^j
\]
has the property that $v_1(f) - g$ is flat. Because $\eta$ and $df = v_1(f)v_1^* + v_2(f)v_2^*$ are closed,
\begin{align*}
0 = d\eta - d^2f &= (v_1(h) - v_2(g) - v_1v_2(f) + v_2v_1(f))v_1^*\wedge v_2^*\\
&= (v_1(h - v_2(f)) - v_2(g - v_1(f))) v_1^*\wedge v_2^*\\
\end{align*}
Because $v_1(f) - g$ is flat, so is $v_2(g - v_1(f))$, and it therefore follows that $v_1(h - v_2(f))$ is flat and therefore $h - v_2(f)$ is analytically a constant function. Since $h(0) = 0$, $h - v_2(f)$ is flat. Notice that
\[
(g - v_2(f))v_1^* + (h - v_1(f))v_2^* = \frac{g - v_2(f)}{x^2 + y^2}(xdx + ydy) + \frac{h - v_1(f)}{x^2 + y^2}(-ydx + xdy)
\]
is smooth by the lemma. It is also closed, so by Poincare's lemma has a primitive $F$. Then $f + F$ is a primitive for $\eta$. To calculate ${^E}H^2(\mathbb{R}^2)$, let $K$ be the closed $E$-forms of degree 2, and consider
\[
K \rightarrow \mathbb{R} \hspace{2cm}
gv_1^*\wedge v_2^* \mapsto g(0).
\]
By a similar argument as before, this map is surjective and contains all exact forms in its kernel. Let $\eta = gv_1^* \wedge v_2^*$ be a general element of the kernel, and define $f$ in the same way as above so that $v_1(f) - g$ is flat. Then $\eta - df$ is a closed smooth form, and hence has a primitive $F$. Then $f + F$ is a primitive for $\eta$.
\end{proof}

\subsection{A Poisson cohomology computation for $ E$}

The Poisson cohomology associated to  the Poisson structure $\Pi_E=v_1\wedge v_2= (x^2+y^2) \frac{\partial}{\partial x}\wedge \frac{\partial}{\partial y}$ on $\mathbb R^2$ was computed by Abreu and Ginzburg and an explicit computation is available at \cite{ginzburg} (Proposition 2.16).

\begin{theorem}[Abreu-Ginzburg, \cite{ginzburg}]
The Poisson cohomology groups of $(\mathbb R^2,\Pi_E)$  are as follows:
\begin{enumerate}
\item $H^0_{\Pi_E}(\mathbb R^2)=\mathbb R$ is given by constant functions.
\item $H^1_{\Pi_E}(\mathbb R^2)=\mathbb R^2$ is generated, by the rotation $x\frac{\partial }{\partial y} - y\frac{\partial }{\partial x}$, and by the dilation $x\frac{\partial }{\partial x} + y\frac{\partial }{\partial y}$.
\item  $H^2_{\Pi_E}(\mathbb R^2)=\mathbb R^2$ generated by $\frac{\partial}{\partial x}\wedge \frac{\partial}{\partial y}$ and $\Pi_E$.
\end{enumerate}
\end{theorem}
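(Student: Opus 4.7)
The plan is to compute the Lichnerowicz--Poisson complex of $(\mathbb{R}^2, \Pi_E)$ directly. In dimension two this complex has only three nonzero terms,
\[
C^\infty(\mathbb{R}^2) \xrightarrow{d_\Pi} \mathfrak{X}^1(\mathbb{R}^2) \xrightarrow{d_\Pi} \mathfrak{X}^2(\mathbb{R}^2),
\]
with $d_\Pi = [\Pi_E,\cdot]$ the Schouten bracket with $\Pi_E = (x^2+y^2)\,\partial_x \wedge \partial_y$. Expanding in coordinates gives $d_\Pi f = (x^2+y^2)(\partial_y f\,\partial_x - \partial_x f\,\partial_y)$ and $d_\Pi(g\partial_x + h\partial_y) = \bigl((x^2+y^2)(\partial_x g + \partial_y h) - 2xg - 2yh\bigr)\,\partial_x\wedge\partial_y$. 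The degree-zero case is then immediate: a Casimir is annihilated by the Hamiltonian vector fields of $x$ and $y$, which already span $T_p\mathbb{R}^2$ at every point $p\neq 0$, so any Casimir must be locally constant off the origin and hence globally constant.

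For $H^1_{\Pi_E}$ and $H^2_{\Pi_E}$ I would split every function, vector field, and bivector into the sum of its formal Taylor series at $0$ and an analytically flat remainder, mirroring the scheme of the preceding $E$-cohomology computation. On the formal side, $d_\Pi$ raises total polynomial degree by exactly one, so one works degree by degree on the finite-dimensional spaces $P_k$ of degree-$k$ homogeneous polynomials. A short linear-algebraic calculation on the induced maps $D_k\colon (P_k)^{\oplus 2} \to P_{k+1}$ then yields: the kernel on degree-one vector fields is exactly $\mathrm{span}(v_1, v_2)$, while at every other polynomial degree the kernel is hit by $d_\Pi$ applied to a polynomial Hamiltonian; and the image of $D_k$ is all of $P_{k+1}$ except when $k=1$, where it misses precisely the line spanned by $x^2+y^2$. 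Since the constant bivectors in $P_0$ are never in the image (which always has degree $\geq 1$), this produces a two-dimensional polynomial part of $H^1$ generated by $v_1, v_2$ and a two-dimensional polynomial part of $H^2$ generated by $\partial_x\wedge\partial_y$ and $\Pi_E$.

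The main obstacle is to show that the analytically flat remainders contribute nothing. The crucial input is the lemma of the previous subsection: if $f$ is analytically flat at $0$, then $f/(x^2+y^2)$ is smooth and again flat. For a flat bivector $F\,\partial_x\wedge\partial_y$, this allows one to solve the PDE $(x^2+y^2)(\partial_x g + \partial_y h) - 2xg - 2yh = F$ by the same template used in the proof of the $E$-cohomology theorem above: solve formally in Taylor jets at the origin, then absorb the remaining flat defect by dividing the relevant flat functions by $x^2+y^2$ and invoking Poincar\'e's lemma for the resulting smooth closed form. The analogous argument shows that any flat Poisson vector field is Hamiltonian. Assembling the polynomial and flat contributions yields the three cohomology groups in the statement, recovering the direct calculation in \cite{ginzburg}.
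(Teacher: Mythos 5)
The paper offers no proof of this statement at all---it is imported verbatim from \cite{ginzburg} (Proposition 2.16)---so your direct computation is necessarily a different route from the paper's, and it is a sound one: it runs the Lichnerowicz complex through exactly the jet-plus-flat engine that the paper uses for its $E$-cohomology computation in the preceding subsection. The facts you delegate to ``a short linear-algebraic calculation'' are all true and do follow from a short calculation: writing $q:=x^2+y^2$, $D_k(g,h)=q(\partial_xg+\partial_yh)-2xg-2yh$ and decomposing $P_{k+1}$ into harmonic polynomials of degree $k+1$ plus $q\cdot P_{k-1}$, one checks that $(xm,ym)\mapsto (k-1)qm$ for $m\in P_{k-1}$ (Euler's identity) and, with $z=x+iy$, that $(\mathrm{Re}\,z^{k},-\mathrm{Im}\,z^{k})\mapsto -2\,\mathrm{Re}\,z^{k+1}$ and $(\mathrm{Im}\,z^{k},\mathrm{Re}\,z^{k})\mapsto -2\,\mathrm{Im}\,z^{k+1}$; this gives surjectivity of $D_k$ for all $k\neq 1$, identifies the cokernel contributions in coefficient degrees $0$ and $2$ with $\mathrm{span}(1)$ and $\mathrm{span}(q)$, i.e.\ with $\partial_x\wedge\partial_y$ and $\Pi_E$, and a dimension count ($\dim\ker D_k=2(k+1)-(k+2)=k=\dim P_{k-1}$) then forces exactness in the middle for $k\geq 2$, while $\ker D_1=\mathrm{span}(v_1,v_2)$ is never hit because every Hamiltonian vector field has vanishing linear part. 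Your flat-part template also closes up concretely: if $V=g\partial_x+h\partial_y$ is flat and $d_\Pi V=0$, the $1$-form $-(h/q)\,dx+(g/q)\,dy$ is smooth by the paper's lemma and closed because $\partial_x(g/q)+\partial_y(h/q)=q^{-2}\bigl(q(\partial_xg+\partial_yh)-2xg-2yh\bigr)=0$, so the Poincar\'e lemma yields a global Hamiltonian primitive; for a flat bivector $F\,\partial_x\wedge\partial_y$ one solves $\partial_xu+\partial_yv=F/q^2$ (smooth and flat by two applications of the lemma) and takes $(g,h)=(qu,qv)$. What your route buys over the bare citation is that it exhibits the Poisson and $E$-cohomology computations as the same argument run on two different complexes, which makes the discrepancy in degree $2$ (two classes here versus one in ${}^EH^2$) transparent: $\partial_x\wedge\partial_y$ is a smooth bivector that is not a section of $\Lambda^2({}^ETM)$. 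What it costs is only the routine degree-by-degree verification sketched above, which you should write out to make the proof complete.
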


In particular this proves (due to the second cohomology group computation) that $E$-cohomology is not isomorphic to Poisson cohomology.

\begin{corollary} For the particular choice of $E$ as above and $M=\mathbb R^2$,
  $E$-cohomology is not isomorphic to Poisson cohomology.

\end{corollary}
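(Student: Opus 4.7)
The proof is essentially a one-line dimension count, so the proposal is short. My plan is to simply read off the two $H^2$ groups computed in the two previous theorems of this section and observe that they are not isomorphic as $\mathbb{R}$-vector spaces.

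More concretely, the first step is to invoke the theorem just proved, which gives ${}^E H^2(\mathbb{R}^2) \cong \mathbb{R}$. The second step is to invoke the Abreu--Ginzburg computation recalled above, which gives $H^2_{\Pi_E}(\mathbb{R}^2) \cong \mathbb{R}^2$, with explicit generators $\frac{\partial}{\partial x}\wedge \frac{\partial}{\partial y}$ and $\Pi_E$ itself. Since these vector spaces have different dimensions, they cannot be isomorphic, and hence the two graded cohomologies ${}^E H^\ast(\mathbb{R}^2)$ and $H^\ast_{\Pi_E}(\mathbb{R}^2)$ are not isomorphic either.

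There is essentially no obstacle here, because both computations have already been carried out; the corollary is purely a statement that records their disagreement. If desired, one could also emphasize \emph{why} the Poisson $H^2$ is strictly larger: under the natural map ${}^E H^\bullet(\mathbb{R}^2) \to H^\bullet_{\Pi_E}(\mathbb{R}^2)$ induced by contraction with $\Pi_E$ (equivalently, by the anchor map of the Lie algebroid ${}^E T\mathbb{R}^2$), the class of $v_1^\ast \wedge v_2^\ast$ maps to $\Pi_E$, whereas the class of $\frac{\partial}{\partial x}\wedge\frac{\partial}{\partial y}$ in Poisson cohomology has no preimage, since it corresponds to a bivector that is nondegenerate at the origin and so cannot arise from an $E$-form (whose anchor-image vanishes on the singular locus $\{0\}$). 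This makes the failure of the isomorphism transparent, but is not strictly needed for the corollary.
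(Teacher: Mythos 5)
Your proof is correct and matches the paper's own argument exactly: the corollary follows from comparing the second cohomology groups, ${}^E H^2(\mathbb{R}^2)\cong\mathbb{R}$ versus $H^2_{\Pi_E}(\mathbb{R}^2)\cong\mathbb{R}^2$, which the paper also cites as the reason (``due to the second cohomology group computation''). The additional remark about the anchor map is a nice aside but not part of, nor needed for, the paper's proof.
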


\end{document}